\documentclass[10pt]{article}
\usepackage[utf8]{inputenc}
\usepackage{arxiv}

\usepackage{epsfig}
\usepackage{array}
\usepackage{amssymb,amsmath,amsfonts,moresize}
\usepackage{graphicx,epstopdf} 
\usepackage{color,xcolor}
\usepackage{booktabs}
\usepackage{multirow}
\usepackage{amstext} 
\usepackage{array}   
\usepackage{amsthm}
\usepackage{authblk}
\newcolumntype{C}{>{$}c<{$}}
%

 \newtheorem{theorem}{Theorem}
 \newtheorem{definition}{Definition}
 \newtheorem{proposition}{Proposition}
 
\newtheorem{remark}{Remark}

\providecommand{\keywords}[1]
{
  \small	
  \textbf{\textit{Keywords---}} #1
}

\chead{Well-balanced WENO finite difference    using  global-flux quadrature  with multi-step ODE  weights}
\title{Approximate  well-balanced WENO finite difference   schemes  using a global-flux quadrature method \\with multi-step ODE integrator weights}
\author{M.Kazolea$^1$, C. Par\'es$^2$, M. Ricchiuto}
\affil[1]{INRIA, Univ. Bordeaux, CNRS, Bordeaux INP, IMB, UMR 5251, 200 Avenue de la Vieille Tour, 33405, Talence cedex, France}
\affil[2]{ University of Málaga, Spain}
\date{ }

\begin{document}

\maketitle
\begin{abstract}
In this work, high-order discrete well-balanced methods for one-dimensional hyperbolic systems of balance laws are proposed. We aim to construct a method whose discrete steady states correspond to solutions of arbitrary high-order ODE integrators. However, this property is embedded directly into the scheme, eliminating the need to apply the ODE integrator explicitly to solve the local Cauchy problem.
To achieve this, we employ a WENO finite difference framework and apply WENO reconstruction to a global flux assembled nodewise as the sum of the physical flux and a source primitive. The novel idea is to compute the source primitive using high-order multi-step ODE methods applied on the finite difference grid. This approach provides a locally well-balanced splitting of the source integral, with weights derived from the ODE integrator. By construction, the discrete solutions of the proposed schemes align with those of the underlying ODE integrator.
The proposed methods employ WENO flux reconstructions of varying orders, combined with multi-step ODE methods of up to order 8, achieving steady-state accuracy determined solely by the ODE method's consistency. Numerical experiments using scalar balance laws and shallow water equations confirm that the methods achieve optimal convergence for time-dependent solutions and significant error reduction for steady-state solutions.
\end{abstract}
\keywords{approximate well-balanced, global-flux, multi-step methods, hyperbolic balance laws}

\section{Introduction}
This paper is devoted to the solution of  one-dimensional systems of balance laws of the form
\begin{equation}\label{sbl}
U_t + F(U)_x = S(U) H_x + s(U,x),
\end{equation}
where $U(x,t)$ takes value in $\Omega \subset \mathbb{R}^N$, $F: \Omega  \to \mathbb{R}^N$ is the flux function;
$S: \Omega \to \mathbb{R}^N$; and $H$ is a known function from $\mathbb{R} \to \mathbb{R}$ (possibly the identity function $H(x) = x$).  The last term $s: \Omega\times\mathbb{R} \to \mathbb{R}^N$ accounts for extra  point-wise  sources.
Let us assume initially that $H$ is continuous and $s$ is not singular. The system is supposed to be hyperbolic, i.e. the Jacobian $J(U)$ of the flux function is assumed to have $N$ different real eigenvalues. 

Prototypes of scalar and systems of balance laws of the form \eqref{sbl} are:

\begin{itemize}
\item Burgers' equations with source term which corresponds to $N = 1$ and
\begin{equation}\label{Burgers}
F(U) = \frac{U^2}{2}, \quad S(U) = U^p, \quad s(U,x) \equiv 0.
\end{equation}

\item  The shallow water system  which corresponds to $N = 2$ and
\begin{equation}\label{sw}
U = \left[ \begin{array}{c} h \\ q \end{array} \right], \quad F(U) = \left[ \begin{array}{c} q \\\displaystyle \frac{q^2}{h} + \frac{g}{2}h^2 \end{array} \right], 
\quad S(U) = \left[ \begin{array}{c} 0 \\ gh \end{array} \right]. 
\end{equation}
The variable $x$ makes reference to the axis of the channel and $t$ is time; $q(x,t)$ and $h(x,t)$ represent the mass-flow and the thickness,
respectively; $g$, the acceleration due to gravity;  $H(x)$, the depth measured from a fixed level of reference;  $q(x,t)=h(x,t)u(x,t),$ with $u$ the depth averaged horizontal velocity. If friction is also accounted for then we have
\begin{equation}\label{sw1}
s(U,x) = \left[ \begin{array}{c} 0 \\ -\kappa(U,x) q \end{array} \right] 
\end{equation}
with $\kappa(U,x)\ge 0$ a  friction coefficient depending usually on  hydrodynamic assumptions and on the  bottom rugosity. Other effects can also be included, as e.g. wind forcing or variable atmospheric pressure, but will not be considered here. 

\end{itemize}

We consider semi-discrete finite-difference numerical methods of the form
\begin{equation}\label{sdmeth}
\frac{d U_i}{dt} + \frac{1}{\Delta x}   \left({F}_{i + 1/2} - {F}_{i-1/2} \right) + S_i=  0,
\end{equation}
Here $U_i(t)$ represents an approximation of $U(x_i, t)$ where $x_0, \dots, x_N$ are the points of a uniform mesh of the computational interval $[a, b]$ and $\Delta x$ is the space step; $F_{i + 1/2}$ is the numerical flux; and $S_i$ the numerical source term. 
We focus here on the development of high-order well-balanced methods for systems of the form \eqref{sbl}, i.e. schemes that preserve in some sense all or some of the stationary solutions of the system, i.e. 
functions $U^*(x)$ that satisfy
$$
F(U^*)_x = S(U^*) H_x + s(U^*,x).$$
For the prototypes introduced above, the stationary solutions are as follows:
\begin{itemize}

\item For Burgers' equations with source term \eqref{sbl}-\eqref{Burgers} the stationary solutions satisfy the ODE
$$
U_x = U^{p-1} H_x$$
whose solutions are given by
\begin{equation}\label{ssBurgers2}
U e^{-H} = C
\end{equation}
if $p = 2$ and
\begin{equation}\label{ssBurgersp}
\frac{U^{2-p}}{2-p} - H = C
\end{equation}
otherwise. Here, $C$ represents an arbitrary constant.

\item  For the shallow water system \eqref{sbl}-\eqref{sw}, the stationary solutions satisfy in absence of friction
\begin{equation}\label{ssSW}
q = C_1, \quad \frac{1}{2}\frac{q^2}{h^2} + gh - gH = C_2,
\end{equation} 
where $C_i$, $i = 1,2$ are arbitrary constants. If $C_1 = 0$ water at rest or lake at rest stationary solutions
\begin{equation}\label{sswarSW}
q = 0, \quad \eta  = C_2,
\end{equation} 
are obtained, where $\eta = h - H$ is the free surface elevation.

In presence of friction, the second in \eqref{ssSW} must be replaced by
the ODE
\begin{equation}\label{ssSW1}
- C^2_1 h^{-2} h_x + gh(h_x -H_x)  + \kappa(h,C_1,x) C_1 =0 
\end{equation}
which can be integrated exactly in some particular cases (cf section \S5.3.3).

\end{itemize}

Well-balanced methods are expected to preserve all the stationary solutions or at least a relevant family of them. They are also expected to correctly handle the waves produced by small perturbations of an equilibrium given by a steady state solution. Different definitions of well-balanced methods can be found in the literature according to the set of stationary solutions that is preserved and to the accuracy with which they are preserved.  The following definitions will be adopted in this work:

\begin{definition} 
The set of point-values $\{ U_i \}_{i=0}^N$ is said to be a discrete stationary solution if it is an equilibrium for the ODE system \eqref{sdmeth}, i.e. if 
$$
 \frac{1}{\Delta x}   \left({F}_{i + 1/2} - {F}_{i-1/2} \right) + S_i=  0.
 $$
\end{definition}

\begin{definition} The numerical method \eqref{sdmeth} is said to be exactly well-balanced for a stationary solution $U^*$ of \eqref{sbl} if the set of its point values
$\{ U^*(x_i) \}$ is a discrete stationary solution.
\end{definition}

\begin{definition} The numerical method \eqref{sdmeth} is said to be well-balanced for a stationary solution $U^*$ of \eqref{sbl} if, for all $\Delta x$, it is possible to find a discrete stationary solution $\{ U^*_i \}$ that approximates $\{U^*(x_i) \}$ with an order of accuracy greater than that of the method \eqref{sdmeth}.
\end{definition}

\begin{definition} The numerical method \eqref{sdmeth} is said to be fully exactly well-balanced (resp. fully well-balanced) if it is exactly well-balanced (resp. well-balanced) for every stationary solution.
\end{definition}

The development of well-balanced numerical methods is a very active front of research. Finite volume methods based on approximate Riemann solvers that preserve exactly a family of stationary solutions were introduced by Roe \cite{Roe87}, and then  by V\'azquez and Berm\'udez in \cite{Bermudez-Vazquez1994}: in the later numerical methods that exactly preserve water-at-rest solutions were introduced for the shallow-water equations. This well-balance property was called there $C$-property. A different strategy based on the use of standard Riemann problems for the homogeneous problem and the so-called hydrostatic reconstruction to discretize the source term was introduced in  \cite{audusse2004fast} and generalized in \cite{GHR2007} in the framework of path-conservative numerical methods (see also \cite{castro2017well}). Fully well-balanced numerical methods based on Riemann solvers were developed by  Greenberg and Leroux \cite{doi:10.1137/0733001}, and then \cite{GALLOUET2003479}, and more recently \cite{berthon2016fully}
        and subsequent works (e.g. \cite{MICHELDANSAC2017115} or \cite{berthon:hal-04620125} and references therein).

Strategies based on the computation of local equilibria at every cell and every time step have been followed in different works to obtain fully well-balanced methods. In the context of finite volume methods, such methods were built in \cite{CGLP08} for semilinear balance laws by using the so-called well-balanced reconstruction operators: after having computed the local equilibrium, a standard reconstruction operator is applied to the fluctuations of the cell values in the stencil with respect to the local steady state; the reconstruction is then given by the sum of the equilibrium and the reconstructed fluctuations. Since then, this technique has been extended to general systems of balance laws: see     \cite{Castro-CP2020} and the references therein. In particular, it was applied to the shallow-water equations in \cite{CLP13}. Different applications and extensions of this technique can be found in the literature: in the context of Euler equations with gravity, a similar technique was used in \cite{berberich2020} to develop methods that are well-balanced for all the hydrostatic stationary solutions; a strategy based on the use of approximate local equilibria (computed by applying a RK-collocation solver to the ODE system satisfied by the stationary solutions) was introduced in \cite{gomez2021collocation} that allows one to use well-balanced reconstructions in problems in which the expression of the stationary solutions is not available neither explicitly nor implicitly; in  \cite{GCD18} this technique was applied to obtain well-balanced arbitrary-lagrangian-eulerian finite volume schemes for the Euler equations, etc. 

The strategy based on well-balanced reconstructions was extended to finite-difference methods  in  \cite{Castro-CP2020}, where WENO reconstructions were applied to flux fluctuations. These methods are related to those based on the subtraction of the equations satisfied by a given stationary solution to be preserved (see \cite{BERBERICH2021104858} for instance).

An alternative strategy to build high-order well-balanced methods is based on the reconstruction in equilibrium variables: see \cite{Russo2013}, \cite{Kurganov2016} and more recently \cite{Shu24} where a fifth-order fully well-balanced finite-difference method has been introduced. 

Another way to construct well balanced approximations is to  use the primitive of the evolution operator  $G=F-\int (S H_x +s)$   as equilibrium variables.  This method is equivalent to recasting \eqref{sbl} in a pseudo-conservation law form, since now the time evolution of the unknown can be formally expressed as the derivative  of the global flux $G$. Methods based on the reconstruction of global fluxes   have been introduced initially to devise TVD schemes for balance laws \cite{GASCON2001261,CASELLES200916}. Their use has seen
a recent increased interest to produce fully well balanced methods 
         \cite{Kurganov2019,Ciallella2023} which embed consistency 
         with some integrated form of the steady state ODE. A clear characterization of this consistency has been provided in \cite{MANTRI2024112673},  in which, 
         for finite element  based schemes, the authors show that the global flux formulation allows
         to recover specific continuous collocation discretizations of the steady ODE.
         This has the advantage of giving a clear definition of the discrete steady state,   with   rigorous (super)consistency estimates.  The formulation in the last reference shows that the iterative computation of the global flux on the mesh is not always necessary. For this reason the approach is rater referred to as global flux \emph{quadrature}, as it boils down
         to a non-trivial quadrature of the source terms. 
         Global flux techniques allow to embed the full evolution operator and have been shown to have many similarities to residual based techniques \cite{Abgrall2022,micalizzi24}. An approach to generalize this notions to multiple dimensions has also been proposed in \cite{brt25}. Global flux  quadrature methods  are not exactly well balanced in general, and fall in the class of methods of Definitions  3 and 4.

Coming back to  \cite{ParesPulido}, the family of well-balanced high-order methods for \eqref{sbl}  introduced there for the case $s=0$ write as follows:
\begin{equation}\label{wbsdmeth}
\frac{d U_i}{dt} + \frac{1}{\Delta x}   \left( \widehat{\mathcal{F}}_{i,i + 1/2} - \widehat{\mathcal{F}}_{i,i-1/2} \right)  =  0,
\end{equation}
where 
the \lq\lq numerical fluxes\rq\rq $\widehat{\mathcal{F}}_{i, i\pm 1/2}$ are computed as follows:
\begin{enumerate}

\item Look for the {solution} $U^*_i (x)$ of the Cauchy problem 
 \begin{equation}\label{Cauchy}
 \begin{cases}
      & \displaystyle F(U^*_i)_x = S(U^*_i)H_x + s(U^*,x), \\[0.5em]
      & \displaystyle U^*_i(x_i) = U_i.
 \end{cases}
 \end{equation}

\item Define
\begin{equation}\label{mod-flux1} 
\mathcal{F}_j = F(U_j) - F(U^*_i(x_j)), \quad j = i-1-r,\dots, i+s
\end{equation}

\item  Compute
\begin{eqnarray}
\widehat{\mathcal{F}}_{i,i+1/2} & =&  {\mathcal{R}}(\mathcal{F}_{i-r}, \dots, \mathcal{F}_{i+s}), \label{fi+1/2-1}\\
\widehat{\mathcal{F}}_{i,i-1/2} & = & {\mathcal{R}}(\mathcal{F}_{i-1-r}, \dots, \mathcal{F}_{i-1+s}), \label{fi-1/2-1}
\end{eqnarray}
where $\mathcal{R}$ represents a flux-reconstruction operator that gives high-order approximation of the value at the intercells of a function from the values at the points of the stencil $\mathcal{S}_i = \{ x_{i-r}, \dots, x_{i + s} \} $. 

\end{enumerate}

Observe that, \eqref{wbsdmeth} can be written in the form \eqref{sdmeth} with
\begin{equation}\label{numflux}
{F}_{i+1/2} =   {\mathcal{R}}({F}(U_{i-r}), \dots, {F}(U_{i+s})),
\end{equation}
and
\begin{equation} \label{source1}
S_i  =  \frac{1}{\Delta x}\left({\mathcal{R}}\Bigl(F(U_i^*(x_{i-r})), \dots, F(U_i^*(x_{i+s}) )\Bigr)-  {\mathcal{R}}\Bigl(F(U^* _i(x_{i-1-r})), \dots, F(U^*_i(x_{i-1+s}))\Bigr)  \right),
\end{equation}
if the reconstruction operator is such that
\begin{equation}\label{Rlinear}
{\mathcal{R}}(\mathcal{F}_{i-r}, \dots, \mathcal{F}_{i+s})
= {\mathcal{R}}({F}_{i-r}, \dots, {F}_{i+s}) - {\mathcal{R}}\Bigl(F(U_i^*(x_{i-r})), \dots, F(U_i^*(x_{i+s}) )\Bigr).
\end{equation}
These methods are fully exactly well-balanced. Their main drawback is that a local stationary solution has to be computed at every point and every time step. This  may be costly or even  impossible when the analytic expression in explicit or implicit form of the stationary solutions is not available. The method is for example not applicable in general to flows with friction. In these cases, an alternative is to apply an RK-collocation method to the Cauchy problem \eqref{Cauchy}, like in \cite{gomez2021collocation} to obtain approximations $U^*_{i,j} \approx U^*_i(x,j) $ and then define
$$\mathcal{F}_j = F(U_j) - F(U^*_{i,j}), \quad j = i-1-r,\dots, i+s.$$
The numerical methods are then fully well-balanced but not exactly: what is preserved now is not the set of point values of stationary solutions but their approximations using the selected RK collocation method.

The goal of this paper is to obtain well-balanced high-order methods that do not require  necessarily the computation of stationary solutions neither exact nor approximate at every mesh point in every time step. We do however require
a rigorous characterization of the discrete steady state.
To do this,  we start from the global flux approach. The PDE system is thus written in the form
$$
U_t + F(U)_x - R_x = 0,
$$
where
$$
R = \int_{a}^x \big( S(U)H_x +  s(U,x)\big)\, dx$$
for some $a \in \mathbb{R}$; then, the flux reconstruction operator is applied to reconstruct $F(U) - R$. 

In \cite{Ciallella2023}, a quadrature formula with points in the cell was considered to compute $R$ and WENO reconstructions were used to approximate the solution at the quadrature points. The key point here is that  a quadrature formula
$$
\int_{x_{j}}^{x_{j + 1}} S(U)H_x \,dx \approx \Delta x \sum_{m=0}^s \beta_m \big[ S(U_{j +1 -s + m })H_x(x_{j + 1 - s +   m})   + s_{j +1 -s + m }  \big]$$
using the approximations of the solution at the left will be considered instead. 
Moreover,  in the spirit of \cite{MANTRI2024112673},  we build some inherent consistency with well 
knwon high order ODE integrators. To this end,
the coefficients $\beta_m$ of the formula will be those of a multistep ODE solver and it will be shown that the numerical methods are well-balanced in the sense that they preserve the approximations 
of a stationary solution obtained by applying the multistep method to \eqref{Cauchy} in an adequate form. This has the neat advantage that, if necessary,  with one sweep of the ODE solver
one can obtain the discrete steady state to use as well prepared initial data.
The methods are  extended to the approximations of singular sources, and applied to the scalar Burger equation, and to the 
 shallow water equations with bathymetry, and friction.

The organization of the paper is as follows: in Section 2, the general form of the high-order finite-difference methods based on global-flux quadrature are introduced. Then, the discrete stationary solutions are studied and the well-balanced property is stated and shown. For the sake of completeness, the expression of the WENO reconstruction operators used here is given to conclude the section. Section 3 is devoted to enhancements of the methods: their extension to problems with singular source terms and their modification to satisfy the $C$-property, i.e. to exactly preserve water-at-rest solution for the shallow-water equations. The implementation details are given in Section 4 and the numerical results are shown in Section 5. There, the numerical methods are applied to the two prototypes equations and also to the shallow-water equations with friction to check their accuracy and well-balance properties. Finally conclusions are drawn.

\section{Well-balanced scheme using global flux quadrature} \label{S:methods}

\subsection{Global-flux approach}
High-order finite-difference methods for \eqref{sbl} of the form 
\begin{equation}\label{wbsdmeth2}
\frac{d U_i}{dt} + \frac{1}{\Delta x}   \left( \widehat{\mathcal{F}}_{i + 1/2} - \widehat{\mathcal{F}}_{i-1/2} \right)  =  0
\end{equation}
will be considered. 
The algorithm to compute the generalized numerical fluxes, based on the global-flux approach, is as follows:

\begin{enumerate}

\item Compute approximations
$$
R_j \approx \int_{x_0}^{x_j}\left( S(U) H_x + s(U,x) \right) \, dx , \quad j=0, \dots, N,$$
as follows:
	\begin{equation}\label{Rj}
	R_0 = 0, \quad {R}_{j + 1} = R_j +  I_j(U),  \quad j=0, \dots, N-1,
	\end{equation}
 where 
$$
  \mathcal{I}_j(U) \approx  \int_{x_{j}}^{x_{j + 1}} \left( S(U) H_x + s(U,x) \right)  \,dx.
$$ 

\item Define
\begin{equation}\label{mod-flux2} 
\mathcal{F}_j = F(U_j) - R_j, \quad j = i-1-r,\dots, i+s.
\end{equation}

\item  Use a high-order  flux-reconstruction operator to compute
\begin{eqnarray}
\widehat{\mathcal{F}}_{i+1/2} & =&  {\mathcal{R}}(\mathcal{F}_{i-r}, \dots, \mathcal{F}_{i+s}), 
\label{fi+1/2-2}\\
\widehat{\mathcal{F}}_{i-1/2} & = & {\mathcal{R}}(\mathcal{F}_{i-1-r}, \dots, \mathcal{F}_{i-1+s}).
\label{fi-1/2-2}
\end{eqnarray}

\end{enumerate}

A quadrature  formula that use  $U_{j + 1-s}, \dots, U_{j+1} $ to approximate the integral of the source term in the interval $[x_j, x_{j+1}]$ will be considered here
\begin{equation}\label{quad}
\begin{aligned}
  \mathcal{I}_{j}(U) := \Delta x  &\sum_{m=0}^s \beta_m S(U_{j +1 -s + m })H_x(x_{j + 1 - s +   m})\\
  +\Delta x  &\sum_{m=0}^s \beta_m 
  s(U_{j +1 -s + m },x_{j + 1 - s +   m})
  \end{aligned}
\end{equation}
whose coefficients $\beta_m$ are obtained from some multi-step ODE integrator of choice,
as e.g. those of Adams-Bashforth methods with  $s$ steps (of order $s$) if $\beta_s = 0$, or Adams-Moulton with $s$ steps  (of order $s+1$) if $\beta_s \not= 0$.
In principle, these integrators can be chosen with  accuracy greater than that of the spatial reconstruction operator.
Other methods as e.g. multi-step and spectral multi-step collocation methods can be used, see  e.g. \cite{LieNorsett,ShengEtAl}.

Observe that, if 
\begin{equation}\label{Rlinear2}
{\mathcal{R}}(\mathcal{F}_{i-r}, \dots, \mathcal{F}_{i+s})
= {\mathcal{R}}({F}_{i-r}, \dots, {F}_{i+s}) - {\mathcal{R}}\Bigl(R_{i-r}, \dots, R_{i+s} ),
\end{equation}
\eqref{wbsdmeth2} can be written in the form \eqref{sdmeth} with \eqref{numflux} and
\begin{equation}\label{source2}
S_i  = \frac{1}{\Delta x}\left({\mathcal{R}}\Bigl(R_{i-r}, \dots, R_{i+s} \Bigr)-  {\mathcal{R}}\Bigl(R_{i-1-r}, \dots, R_{i-1+s}\Bigr)  \right).
\end{equation}

Note that     \eqref{mod-flux1} and   \eqref{mod-flux2} provide a similar modification to the scheme,
the major difference being of course the way in which the subtracted flux is obtained.
Another important difference between the two methods is the following conservation property that is not satisfied by method \eqref{wbsdmeth2}:
\begin{proposition} \label{Prop:cons} Let us assume that \eqref{Rlinear2} holds. If the reconstruction procedure is computed component-by-component and it is exact for the null function,  then method \eqref{wbsdmeth2}-\eqref{quad} is conservative for the systems of conservation laws included in system \eqref{sbl}.
\end{proposition}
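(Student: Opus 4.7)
The plan is to verify that, when restricted to any subset of components $c$ for which the source data are absent (i.e. $S(U)|_c \equiv 0$ and $s(U,x)|_c \equiv 0$), the discretization \eqref{wbsdmeth2}-\eqref{quad} reduces to the standard flux-difference form of a conservative finite-difference scheme. Equivalently, the numerical flux $\widehat{\mathcal{F}}_{i+1/2}|_c$ must coincide when evaluated in the update equations of cells $i$ and $i+1$, so that a spatial summation telescopes and yields a discrete conservation statement for those components.

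The argument proceeds by a short chain of substitutions. First, restricting the quadrature formula \eqref{quad} to component $c$ annihilates every summand, since each carries either the factor $S(U)|_c$ or the factor $s(U,x)|_c$, both of which vanish. This yields $\mathcal{I}_j(U)|_c = 0$ for every interval index $j$. Feeding this into the recursion \eqref{Rj}, together with the initial datum $R_0 = 0$, gives $R_j|_c = 0$ for all $j$, and therefore, from \eqref{mod-flux2}, the modified nodal flux satisfies $\mathcal{F}_j|_c = F(U_j)|_c$.

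Next, the reconstruction hypotheses are used to transfer this identity to the interface flux. Because $\mathcal{R}$ is applied component-by-component, its $c$-th output depends only on the $c$-th entries of its inputs, so
\begin{equation*}
\widehat{\mathcal{F}}_{i+1/2}\big|_c \;=\; \mathcal{R}\bigl(F(U_{i-r})|_c,\ldots,F(U_{i+s})|_c\bigr).
\end{equation*}
The "exact for the null function" assumption is what precludes a spurious contribution from the vanishing $R$ entries — in particular when the reconstruction is split in the form \eqref{Rlinear2}, it guarantees $\mathcal{R}(0,\ldots,0) = 0$, a mild consistency property that does not automatically hold for every implementation of a nonlinear (WENO-type) reconstruction. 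The resulting expression depends on the interface $x_{i+1/2}$ only through its stencil $\{i-r,\ldots,i+s\}$ and is therefore invariant under shifting the anchor cell from $i$ to $i+1$, so that $\widehat{\mathcal{F}}_{i+1/2}|_c = \widehat{\mathcal{F}}_{(i+1)-1/2}|_c$. Telescoping is immediate and the proposition follows.

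The main obstacle is conceptual rather than technical: one has to keep track of where the two reconstruction hypotheses actually do work — the component-wise one severs the coupling of the conserved components to the (generally nonzero) $R$ entries of the other components, while the null-function one ensures that the trivial data on the conservation-law block are handled cleanly by $\mathcal{R}$. The contrast with the earlier scheme \eqref{wbsdmeth}-\eqref{mod-flux1} makes this sharp: there, the subtracted value $F(U_i^*(x_j))$ carries an explicit dependence on the anchor cell $i$ even in the absence of a source, which destroys conservativity on a pure conservation-law subsystem. The global-flux quadrature repairs this defect precisely because the subtracted term $R_j$ is tied to the mesh point $x_j$ alone and not to the cell being updated.
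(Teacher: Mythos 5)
Your proof is correct and follows essentially the same route as the paper's: you show that the $c$-th component of $R_j$ vanishes for all $j$ via \eqref{quad} and the recursion \eqref{Rj}, and then use the component-wise application of $\mathcal{R}$ together with its exactness on the null function (through the splitting \eqref{Rlinear2}) to conclude that the conservation-law components see only the standard flux reconstruction, i.e.\ the numerical source term \eqref{source2} vanishes there. The extra discussion of single-valuedness of the interface flux and the contrast with \eqref{wbsdmeth}--\eqref{mod-flux1} is consistent with the paper's remark following the proposition and does not change the substance of the argument.
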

\begin{proof}
Assume that the $l$th equation is a conservation law, i.e. the $l$th components of $S(U)$ and $s(U,x)$ vanish. Then, from \eqref{Rj} and \eqref{quad} it is clear that the $l$th component of $R_j$ vanishes for all $j$. Finally, the assumptions on the reconstruction operator together with 
\eqref{source2} implies that the $l$th component of the numerical source term also vanishes.
\end{proof}
In particular, this result implies that, under the assumption of the Proposition, mass-conservation will be satisfied by method \eqref{wbsdmeth2} for the shallow-water equation, what is not in general the case for \eqref{wbsdmeth}: see \cite{ParesPulido}.

\begin{remark}[Evaluation of $H_x$] The derivative of $H$ can be performed analytically or
numerically with a sufficiently high accurate approximation.  For the shallow water equations,
considerations related to a particular steady state will lead to a specific approach which will be discussed in section 
\S\ref{sec_lakeatrest}.
\end{remark}

\subsection{Discrete stationary solutions} \label{ss:DSS}
Let us assume that the flux reconstruction operator is such that
\begin{equation}\label{fluxeq}
 {\mathcal{R}}(\mathcal{F}, \dots, \mathcal{F}) =  \mathcal{F}, \quad \forall \mathcal{F}.
\end{equation}
Then, a sequence of point values  $\{ U^*_j\}$ satisfying 
\begin{equation}\label{discss}
F(U^*_{j+1}) -  F(U^*_{j}) = \mathcal{I}_j(U^*) , \quad j= 0, \dots, N-1
\end{equation}
is a discrete stationary solution. 
In effect, if $\{ U^*_j \}$ satisfies \eqref{discss}, then one has:
$$
\mathcal{F}_{j+1} = F(U^*_{j+1})  - R_{j+1} =  F(U^*_{j})  - R_{j} = \mathcal{F}_{j}, \quad \forall j,
$$
so that the numerical fluxes are constant and then
$$
\widehat{\mathcal{F}}_{i,i+1/2}  - \widehat{\mathcal{F}}_{i,i-1/2} = \mathcal{F}_0 - \mathcal{F}_0 =  0,
$$
and the right-hand side of \eqref{wbsdmeth2} vanishes. Observe that \eqref{discss} is the numerical counterpart of the equality
$$
F(U^*(x_{j+1})) -  F(U^*(x_{j})) = \int_{x_j}^{x_{j+1}} \left( S(U^*) H_x + s(U^*,x) \right) \,dx, \quad j= 0, \dots, N-1
$$
satisfied by the exact stationary solution of the system.

Discrete stationary solutions satisfying \eqref{discss} can be computed as follows: given $U^*_0, \dots, U^*_{s-1}$ find $U^*_j$, $j = s, \dots, N$ such that
\begin{equation}\label{discss-multistep}
\begin{aligned}
F(U_{j}^*) = F(U_{j-1}^*) + \Delta x& \sum_{m=0}^s \beta_m S(U_{j -s m}^*)H_x(x_{j -s  + m}) \\
+ \Delta x& \sum_{m=0}^s \beta_m s(U_{j -s m}^*,x_{j -s  + m}), \quad j=s,\dots, N.
\end{aligned}
\end{equation}
Observe that a nonlinear system has to be solved to compute $U^*_{j}$ at every step and this happens even if the chosen multistep method is explicit, i.e. if
$\beta_s = 0$. In this case, $U^*_{j}$ can be computed as follows:
\begin{enumerate}
\item Compute
$$
F^*_{j} =  F(U_{j-1}^*) + \Delta x \sum_{m=0}^{s-1} \beta_m (S(U_{j - s + m}^*)H_x(x_{j -s  + m}) +s(U_{j - s + m}^*,x_{j -s  + m})).
$$

\item Find $U^*_{j}$ such that:
$$
F(U^*_{j})  =  F^*_{j}.
$$
\end{enumerate}
Observe that the second step requires the inversion of the flux function. We will assume here that the flux function
$$
U \mapsto F = F(U)$$
is one-to-one, what is true in a neighborhood of every state $U$ that is not sonic, i.e. if the eigenvalues of the Jacobian of the flux $DF(U)$ do not vanish. 

The following result holds:
\begin{theorem}[Discrete stationary states]\label{th_stationary_states}
If the flux function is a one-to-one map, then the numerical method \eqref{wbsdmeth2}-\eqref{quad} is fully well-balanced with formal  accuracy equal to the multi-step ODE solver with weights $\{\beta_m\}_{m=0,s}$.
\end{theorem}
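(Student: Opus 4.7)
The plan is to prove two things in sequence: (i) that for every stationary solution $U^*$ of \eqref{sbl} there exists a discrete stationary solution $\{U_j^*\}$ of \eqref{wbsdmeth2}--\eqref{quad}, and (ii) that this discrete stationary solution approximates the nodal values $\{U^*(x_j)\}$ with the order of the underlying multi-step ODE integrator. Part (i) establishes full well-balancedness in the sense of Definition~3, and part (ii) gives the quantitative accuracy claim.

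For (i), I would construct $\{U_j^*\}$ by the recursion \eqref{discss-multistep}. The one-to-one assumption on $F$, combined with the implicit function theorem applied in a neighborhood of any non-sonic state $U^*(x_j)$, guarantees that at each step $j = s, \dots, N$ the equation $F(U_j^*) = F_j^*$ in the explicit case $\beta_s = 0$, or its implicit counterpart with the $\beta_s$ contribution in the case $\beta_s \neq 0$, admits a unique solution provided $\Delta x$ is small enough. By construction, \eqref{discss-multistep} is just a rewriting of \eqref{discss} using the quadrature \eqref{quad}, so the sequence $\{U_j^*\}$ satisfies the discrete stationary balance equation. The remainder of (i) is the short argument already spelled out just before the theorem: \eqref{discss} forces $\mathcal{F}_j = F(U_j^*) - R_j$ to be independent of $j$, and property \eqref{fluxeq} of the reconstruction operator then makes both $\widehat{\mathcal{F}}_{i+1/2}$ and $\widehat{\mathcal{F}}_{i-1/2}$ equal to this common value, so the right-hand side of \eqref{wbsdmeth2} vanishes at every node.

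For (ii), I would read \eqref{discss-multistep} as the multi-step ODE scheme with coefficients $\{\beta_m\}$ applied to the Cauchy problem satisfied by $W = F(U^*)$, namely
\begin{equation*}
W_x = S(F^{-1}(W))\, H_x(x) + s(F^{-1}(W),x), \qquad W(x_0) = F(U^*(x_0)),
\end{equation*}
initialized with values $U_0^*, \dots, U_{s-1}^*$ taken to coincide with (or to approximate at sufficiently high order) $U^*(x_0),\dots,U^*(x_{s-1})$. The exact stationary solution of \eqref{sbl} obviously satisfies this Cauchy problem. Classical convergence theory for zero-stable, consistent linear multi-step methods (Adams--Bashforth or Adams--Moulton in our case) then yields that $W_j^*$ approximates $W(x_j) = F(U^*(x_j))$ with order equal to that of the multi-step formula, and inverting $F$ (which is locally Lipschitz under the one-to-one hypothesis away from sonic states) transfers the same order to $U_j^*$ itself. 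Since the $\{\beta_m\}$ are chosen with order at least as high as the spatial reconstruction used in \eqref{fi+1/2-2}--\eqref{fi-1/2-2}, Definition~3 is fulfilled, and the method is fully well-balanced with the announced formal accuracy.

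The main obstacle I expect is not in any single step but in pinning down a clean statement of the regime of validity: one needs to know that the discrete iterates $U_j^*$ remain in the neighborhood of $U^*(x_j)$ where $F$ is invertible and where the nonlinear solve at each multi-step stage converges. This is standard for smooth, non-sonic steady states over a bounded interval and small enough $\Delta x$, but it should be stated explicitly as a working hypothesis, together with the requirement of an order-$p$ accurate starter for the first $s$ nodal values. Once these standing assumptions are in place the accuracy claim reduces to a direct invocation of the Dahlquist-type convergence theorem for linear multi-step methods applied to the ODE recovered above, so no new analytical machinery is needed beyond identifying \eqref{discss-multistep} with that multi-step discretization.
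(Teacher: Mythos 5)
Your proposal is correct and follows essentially the same route as the paper: both identify \eqref{discss-multistep} with the multi-step method applied to the Cauchy problem for $W = F(U^*)$ in the flux variable, invoke the integrator's convergence order, and use the one-to-one hypothesis to transfer the estimate back to $U^*_j = U(F^*_j)$. Your additional remarks on zero-stability, the accuracy of the starting values, and the neighborhood in which $F$ is invertible make explicit some hypotheses the paper leaves implicit (and defers to a remark on sonic states), but they do not change the argument.
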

\begin{proof}
Let $U^*$ be a stationary solution of \eqref{sbl}. Then, $F^*\equiv F(U^*)$ satisfies the Cauchy problem
$$
\left\{
\begin{array}{l}
F^*_x = {S}(U(F^*)) H_x +{s}(U(F^*),x) , \\
F^*(x_0) = F(U^*(x_0)),
\end{array}
\right.
$$
where $U(F)$ represents the inverse of the flux function.
If the selected multistep method is applied to this Cauchy problem starting from
$F^*(x_j)$, $j=0,\dots, s-1$ or from some high-order approximations of them obtained with a one-step method, a sequence of values $\{ F^*_i \}$ is obtained that satisfies for $j=s,\dots, N$
$$
F_{j}^* = F^*_{j-1} + \Delta x \sum_{m=0}^s \beta_m \big[ S(U(F_{j-s + m}^*))H_x(x_{j-s + m}) + s(U(F_{j-s + m}^*),x_{j-s + m}) \big].
$$
Then, the set of values
$$
U^*_j = U(F^*_j), \quad j=0, \dots, N$$
satisfies \eqref{discss-multistep} and, as a consequence, $\{U^*_j\}$ is a discrete stationary solution that approximates $U^*(x_j)$, $j=0, \dots, N$ with the order of accuracy of the multistep method.
\end{proof}

 \begin{remark}[Computation of $U(F)$] The  main assumption of the theorem is that $F(U)$ should be a one-to-one mapping. 
It must be noted that, strictly speaking,  for many systems this assumption may be violated in the neighborhood of sonic states. Indeed, in these points   
 the Jacobian of the flux function is singular, and
 two (or more depending on the system) branches of the function $U(F)$ meet.
 This aspect is covered for the shallow water equations  in many papers, see e.g. \cite{Kurganov2019} section \S2
 for a discussion in the context of a global flux method. 
 As shown in the reference, the inversion of $F(U)$ is well posed provided that we know on which branch
 $U$ should lay.  
 We also remark that in practice,  unlike the method \eqref{wbsdmeth}-\eqref{mod-flux1}, 
 the knowledge of  $U(F)$ is not strictly necessary here, unless the discrete stationary solutions
 are used as well-prepared initialization. The theoretical
 aspects related to sonic points are thus still unclear and   under consideration.
We will evaluate in practice the behaviour of the scheme in the results section.
Finally, the fact that no non-linear system  needs to be solved at every point, like e.g. in 
\cite{gomez2021collocation},  has some advantages in terms of  computational requirements.
 \end{remark}

Theorem \ref{th_stationary_states} shows that
the method proposed exactly preserves  discrete stationary solutions obtained with an ODE solver,
in a spirit similar to  \cite{gomez2021collocation,MANTRI2024112673}. 
As we will see in Section \ref{S:numerical}, the accuracy with which the methods approximate smooth exact stationary solutions can be  changed at will to decrease the error almost to machine accuracy. However, some particular cases deserve special attention, either because their simplicity allows an exact preservation, or because they involve inherently discontinuous solutions. This aspect is discussed in sections \S\ref{sec_lakeatrest} and \S\ref{sec_disc}.

\subsection{WENO reconstructions}\label{ss:WENO}

In the particular case of the  WENO reconstruction of order $p = 2k + 1$, two flux reconstructions are computed using the values at the points $x_{i-k}, \dots, x_{i+k}$:
\begin{eqnarray}
\widehat{\mathcal{F}}^L_{i+1/2} &  = &  {\mathcal{R}}^L(\mathcal{F}_{i-k}, \dots,\mathcal{F}_{i+k}), \\
\widehat{\mathcal{F}}^R_{i-1/2} & = &  {\mathcal{R}}^R(\mathcal{F}_{i-k}, \dots,\mathcal{F}_{i+k}). 
\end{eqnarray}
$\mathcal{R}^L$ and $\mathcal{R}^R$ represent the so-called left and right-biased reconstructions. The  right-biased reconstructions can be computed with $\mathcal{R}^L$ by reflecting the arguments through  the intercell at which the numerical flux is computed.

To compute the numerical fluxes, first a splitting 
$$
\mathcal{F}_j  = \mathcal{ F}^+_j  +\mathcal{ F}^-_j, \quad \forall j,
$$
is considered and then they are defined as follows
\begin{equation}\label{split}
\widehat{\mathcal{ F}}_{i+1/2} =  \widehat{\mathcal{ F}}^{+}_{i+1/2} + \widehat {\mathcal{F}}^{-}_{i+1/2},
\end{equation}
with
\begin{eqnarray}
& & \widehat{\mathcal{ F}}^{+}_{i+1/2}  =  {\mathcal{R}}^L(\mathcal{F}^+_{i-k}, \dots,\mathcal{F}^+_{i+k})  , \label{spllit1}\\
& &   \widehat{\mathcal{ F}}^{-}_{i+1/2}  =  {\mathcal{R}}^R(\mathcal{F}^-_{i-k+1}, \dots,\mathcal{F}^-_{i+k+1})  . \label{spllit2}
\end{eqnarray}

The general upwind biased   splitting
$$
\mathcal{F}^\pm_j = \frac{1}{2} \left( \mathcal{F}_j \pm \frac{1}{\Delta x} \mathcal{T}  J_{i+1/2} \mathcal{F}_j \right),
$$
is considered here, 
where $\mathcal{T} $ is a time-scale matrix and $J_{i+1/2}$ is an approximation of the Jacobian of the flux function $F(U)$ at the intercell (a Roe matrix, for instance). $J_{i+1/2}$ is supposed to have $N$ real different eigenvalues.  The choice
$$
\mathcal{T}  = \Delta x |J_{i+1/2}|^{-1}
$$
leads to the upwind splitting and 
$$
\mathcal{T}  = \dfrac{\Delta t}{2} .
$$
to a Lax-Wendroff type splitting. The former is used in our implementation. 
Observe that this choice guarantees \eqref{fluxeq}: if 
 $$ \mathcal{F}_j = \mathcal{F}, \quad \forall j,
$$
then
$$
 \widehat{\mathcal{ F}}^{+}_{i+1/2} = \frac{1}{2} \left( I + \frac{1}{\Delta x} \mathcal{T}  J_{i+1/2} \mathcal{F}\right) , 
 \quad \widehat {\mathcal{F}}^{-}_{i+1/2} =  \frac{1}{2} \left( I - \frac{1}{\Delta x} \mathcal{T}  J_{i+1/2} \mathcal{F} \right) ,$$
 and then
$$
\widehat{\mathcal{F}}_{i+1/2}  = \left( \frac{1}{2} \left( I + \frac{1}{\Delta x} \mathcal{T}  J_{i+1/2} \mathcal{F} \right) +  \frac{1}{2} \left( I - \frac{1}{\Delta x} \mathcal{T}  J_{i+1/2} \mathcal{F} \right) \right) \mathcal{F} = \mathcal{F}.  \label{upwinda}
$$
On the other hand, it is also clear that $\widehat{\mathcal{F}}_{i+1/2} = 0$ if $\mathcal{F}_j = 0$ for all $j$.
Moreover, since WENO reconstructions are linear combinations of the flux values, \eqref{Rlinear} or \eqref{Rlinear2} are true once the coefficients of these linear combination (that depend non-linearly of the data through the smoothness indicators) have been computed. Therefore, the hypothesis of Proposition \ref{Prop:cons} are satisfied and the well-balanced property holds.

\begin{remark} The Lax-Friedrichs flux-splitting:
$$
\mathcal{F}^\pm_j = \frac{1}{2} \left( \mathcal{F}_j \pm \alpha U \right),
$$
where $\alpha$ is the local (WENO-LLF) or global (WENO-LF)  maximum of the absolute value of the eigenvalues of $\{J(U_i)\}$, does not satisfy \eqref{fluxeq} in general. 
In effect, if
$$
\mathcal{F}_j = \mathcal{F}, \quad \forall j,
$$
one would have
$$
\widehat{\mathcal{ F}}_{i,i+1/2}  =  \mathcal{F} - \frac{\alpha}{2} \left(    {\mathcal{R}}^L(U_{i-k}, \dots,U_{i+k}) -  {\mathcal{R}}^R(U_{i-k+1}, \dots,U_{i+k+1})  \right)
\not= \mathcal{F}.
$$
\end{remark}


\section{Enhancements for singular sources and  water at rest}

\subsection{Discontinuous data and singular source terms}\label{sec_disc}

Let us assume  now  the case in which $s=0$, and  the function $H$ has at least one isolated  discontinuity. In this case, the definition of weak solutions (and, in particular, of stationary solutions)  of \eqref{sbl}  becomes more difficult:  a solution $U$ is expected to be discontinuous at the  discontinuities of $H$ and, in this case,  the source term
$ S(U)H_x$ cannot be defined within the distributional framework. The source term becomes then a non-conservative product that can be defined in infinitely many different forms: see \cite{DalMaso95}.
Like in \cite{Castro-CP2020}, the following criterion is assumed here to define the weak solutions of \eqref{sbl}:  a pair $([U^-, H^-], [U^+, H^+]))$  can be the  limits of $U$ and $H$ of an admissible weak solution of \eqref{sbl} to the left and to the right of  a discontinuity point of  $H$, $x^*$,   if and only if there exists a solution of the 
 ODE system
\begin{equation}\label{jumpcondition1}
 \frac{d \ }{d H}F(U) = S(U)
\end{equation}
satisfying
\begin{equation}\label{jumpcondition2}
U(H^\pm) =  U^\pm.
\end{equation}

This criterion implies for instance that, in the case of Burgers' equation with source term, the relations \eqref{ssBurgers2} or \eqref{ssBurgersp} have to be satisfied by admissible jumps at the discontinuities of $H$. 
In the case of the shallow water system, relations \eqref{ssSW} must be satisfied at the discontinuities of the bottom function.


Let us suppose for simplicity that $H$ has a unique discontinuity point that is placed at an interface  $x_{\ell+1/2}$.
In order to avoid the appearance of oscillations in the computation of discrete stationary solutions with the selected multistep method, an adaptive technique is used in which the number of steps is adjusted so that $x_{\ell+1/2}$ always stays out of the stencil, except when $U^*_{\ell+1}$ is computed. The notation
$$\beta^r_m, \quad m= 0,\dots, r, \quad r = 2, \dots, s,$$
will be used in what follows to represent the weights of the $r$-step method. Using this notation, discrete stationary solutions are computed now as follows: given $U^*_0, \dots, U^*_{s-1}$ 
\begin{itemize}
\item For  $j=s,\dots, \ell$ compute $U^*_{j}$ by 
\begin{equation}\label{firstlaststeps}
F(U_{j}^*) = F(U_{j-1}^*) + \Delta x \sum_{m=0}^s \beta^s_m S(U_{j - s + m}^*)H_x(x_{j -s  + m}), 
\end{equation}

\item For $j = \ell + 1$ compute $U^*_{\ell+1}$ so that $([U^*_{\ell}, H^-(x_{\ell})], [U^*_{\ell+1}, H^+(x_{\ell+1})])$ is an admissible jump   $x_{\ell+1/2}$.

\item For $j = \ell + 2, \dots, \ell + s$ compute $U^*_{j}$ by 
\begin{equation}\label{midsteps}
F(U_{j}^*) = F(U_{j-1}^*) + \Delta x \sum_{m=0}^r \beta^r_m S(U_{j - r + m}^*)H_x(x_{j -r  + m}), 
\end{equation}
with $r = j - l - 1$.

\item For  $j = \ell + s + 1, \dots, N$ compute $U^*_{j}$ by \eqref{firstlaststeps}.

\end{itemize}

Once the computation of the discrete stationary solutions has been set, let us introduce the procedure to approximate 
$$
\int_{x_j}^{x_{j+1}} S(U) H_x(x) dx
$$
so that the numerical method preserves them:

\begin{itemize}
\item if $j < \ell $, proceed as usual 
\begin{equation}\label{firstlastints}
  \mathcal{I}_{j}(U) = \Delta x \sum_{m=0}^s \beta^s_m S(U_{j +1 -s + m })H_x(x_{j + 1 - s +   m});
\end{equation}
\item if $j = \ell$ we set
\begin{equation}\label{discint}
 \mathcal{I}_{\ell}(U) = \tilde S_{\ell +1/2} [\![ H]\!]_{\ell+1/2}
\end{equation}
where $\tilde S_{i+1/2}$ is a linearization of $S(U)$ such that, across an admissible jump at the discontinuity point of $H$, one has
\begin{equation} \label{linearS(U)}
[\![ F]\!]_{i+1/2}= \tilde S_{i+1/2}[\![ H]\!]_{i+1/2};
\end{equation}
\item  if $j = \ell+ r$ with $1 \leq r \leq s$, 
\begin{equation}\label{midints}
  \mathcal{I}_{j}(U) = \Delta x \sum_{m=0}^r \beta^r_m S(U_{\ell + 1 + m })H_x(x_{\ell + 1 +   m});
\end{equation}
\item  $j > \ell + s $, proceed as usual \eqref{firstlastints}.
\end{itemize}


Exact formulas for the linearization of $S(U)$ for the prototype equations are provided in appendix \ref{app_linearization}.\\

An alternative to \eqref{discint} for computing  $\mathcal{I}_{\ell}(U) $ so that \eqref{discss} is satisfied for $j = \ell$ is given by the so-called Generalized Hydrostatic Reconstruction (GHR): see \cite{GHR2007}. To compute the integral in this case, first an intermediate value $H_{\ell + 1/2}$ has to be chosen between $H(x_{\ell})$ and $H(x_{\ell + 1})$; next two states 
$U_{\ell + 1/2}^\pm$ have to be computed such that $\left([U_{\ell}, H(x_{\ell})],  [U^-_{\ell+1/2}, H_{\ell + 1/2}]\right)$ and $\left( [U^+_{\ell+1/2}, H_{\ell + 1/2}], [U_{\ell+1}, H(x_{\ell+1})]  \right)$ are admissible jumps. 
Then,
$$
\mathcal{I}_{\ell}(U)  =  F(U_{\ell+1}) - F(U^+_{\ell+1/2})+  F(U^-_{\ell+1/2}) - F(U_{\ell}).$$
When $\left([U_{\ell}, H(x_{\ell})], [U_{\ell+1}, H(x_{\ell+1})]  \right)$ is an admissible jump then
$U^-_{\ell+1/2} = U^+_{\ell+1/2}$ and \eqref{discss} is satisfied. In practice, $H_{\ell + 1/2}$ can be taken equal to
$H(x_{\ell})$ or to $H(x_{\ell + 1})$ so that only one intermediate state has to be computed: see \cite{GHR2007} for details.

\subsection{Exact preservation of water-at-rest solutions}\label{sec_lakeatrest}

As discussed at the end of  section \S\ref{ss:DSS}, the proposed method preserves exactly some discrete stationary solutions but not the exact ones.  In particular, 
in the case of the shallow water equations, the numerical methods as described above will not preserve exactly the point values of water-at-rest stationary solutions.
To be more precise, they will yield some discrete approximations which  satisfy exactly $q = 0$, but verify  $\eta = constant$ only up to the order of accuracy of the selected multistep method. 
In this sense, the $C$-property stated in \cite{Bermudez-Vazquez1994} is not satisfied. Nevertheless, it is easy to modify the method to recover this property.

To do this, we first compute approximations $\tilde H_i \approx H(x_i)$ such that
$$
\widetilde H_{i} = \widetilde H_{i-1} + \Delta x \sum_{m=0}^s \beta_m H_x(x_{j -s  + m}), 
$$
i.e. the bottom function is approximated by applying the selected quadrature formula to the expression
$$
H(x_i) = H(x_{i-1}) + \int_{x_{i-1}}^{x_i} H_x(x) \, dx.$$
We now consider the   equality 
 \begin{equation}\label{intwar}
 \int_{x_i}^{x_{i+1}} g h H_x \,dx =  \int_{x_i}^{x_{i+1}} g \eta H_x \,dx + \frac{g}{2} [\![H^2]\!]_{i+1/2} . \end{equation}
Based on this equality and introducing the approximations of $H$,  the second component of $\mathcal{I}_{j}(U)$ is computed as follows
 \begin{equation}\label{quadSW1}
  \mathcal{I}_{2,j}(U) = \Delta x \sum_{m=0}^s \beta_m g \eta_{j +1 -s + m } H_x(x_{j + 1 - s +   m}) + \frac{g}{2} [\![(\widetilde{H})^2]\!]_{i+1/2}.
\end{equation}
Let us see that the sequence of point values of a water-at-rest stationary solution $U^*_j = [ h^*_j, q^*_j]^T$, where
$$
q^*_j = 0,\quad h^*_j =  \eta^* + \widetilde{H}_j, \quad \eta^*  = constant,$$
is a discrete stationary solution, i.e. that \eqref{discss} is satisfied. This equality is trivially satisfied for the first components. Let us check the second one: taking into account that the quadrature formula is exact for $H^M_x$, one has
\begin{eqnarray*}
  \mathcal{I}_{2,j}(U)  & = & \Delta x \sum_{m=0}^s \beta_m g \eta^*_{j +1 -s + m } H_x(x_{j + 1 - s +   m}) + \frac{g}{2} [\![(\widetilde{H})^2]\!]_{i+1/2}\\
  & = &   g \eta^*  [\![\widetilde{H}]\!]_{i+1/2}   + \frac{g}{2} [\![(\widetilde{H})^2]\!]_{i+1/2} \\
  & = & g\eta^*  [\![\widetilde{H}]\!]_{i+1/2}   +  {g}  \overline{\widetilde{H}}_{i+1/2}[\![\widetilde{H}]\!]_{i+1/2} \\
    & = & g\overline{\eta^*}_{i+1/2}  [\![h]\!]_{i+1/2}   + g  \overline{\widetilde{H}}_{i + 1/2} [\![h]\!]_{i+1/2} \\
       & = & g\overline {h^*}_{i+1/2} [\![h]\!]_{i+1/2} \\
       & = &  [\![g(h^*)^2/2]\!]_{i+1/2},
       \end{eqnarray*}
where $\overline{w}_{i+1/2}$ represents the arithmetic mean of $w_i, w_{i+1}$ for any variable $w$. Therefore, \eqref{discss} is satisfied as we wanted to prove.

Another possibility is to  proceed as in \cite{MANTRI2024112673}  and  consider a polynomial interpolation of    $H$ of degree $M = s$, denoted by   $H^M$. Then, the second component of $\mathcal{I}_{j}(U)$ is computed as follows
 \begin{equation}\label{quadSW2}
  \mathcal{I}_{2,j}(U) = \Delta x \sum_{m=0}^s \beta_m g \eta_{j +1 -s + m } H^M_x(x_{j + 1 - s +   m}) + \frac{g}{2} [\![(H^M)^2]\!]_{i+1/2}.
\end{equation}
In this case, the sequence of point values of a water-at-rest stationary solution $U^*_j = [ h^*_j, q^*_j]^T$, where
$$
q^*_j = 0,\quad h^*_j =  \eta^* + H(x_j), \quad \eta^*  = constant,$$
is a discrete stationary solution, i.e. that \eqref{discss} is satisfied: the proof is similar, taking into account that the quadrature formula is exact for $H^M_x$.

Both approaches have different appeals. The first provides in practice lower errors fir smooth bathymetries.
The second  manages better bathymetries with lower regularity. Since for discontinuous bathymetries we use the local modification
discussed in the previous section,   we have chosen to use the first  method in the numerical applications.

\section{Implementation details}
\label{section:implementation_details}
In Section \ref{S:numerical} some numerical tests will be shown in which the implementation of the methods introduced in Section \ref{S:methods} is as follows: 
\begin{itemize}
\item WENO reconstruction operators based on the upwind splitting are used (see \ref{ss:WENO}). Like in \cite{ParesPulido},  Jiang-Shu smoothness indicators (see \cite{JiangShu})  are used here.
\item The coefficients of multi-step Adams-Bashforth method of order $q$ (AB$q$) or Adams-Moulton of order $q$ (AB$q$) will be used to compute the integrals of the source term. These coefficients up to order 8 are shown in appendix \ref{app_AM-AMcoeff}.
\item TVD-RK time stepping is applied to the semi-discrete in space numerical methods
\eqref{wbsdmeth2} (see \cite{Gottlieb98}).
\item CFL statility condition 
$$\Delta t = CFL \frac{\Delta x}{( \max_i \{ \lvert \lambda_{j,i+1/2} \rvert, 1 \leq l \leq N\}) } $$
is imposed with $CFL \in (0,1)$, where $\lambda_{j, i+1/2}$ are the eigenvalues of the Jacobian of the flux function computed at some intermediate states. Unless otherwise stated $CFL = 0.45$.
\item Boundary conditions are imposed through the use of ghost nodes.
\end{itemize}

Taking into account these ingredients, two families of methods are considered:
\begin{itemize}
\item WENO$p$GF-AB$q$: WENO reconstructions of order $p=3,5, 7$ with the Global Flux approach using Adams-Bashforth method of order  $q=4,6,8$ ($s = q$ steps) to compute the integrals;

\item WENO$p$GF-AM$q$: WENO reconstructions of order $p=3,5,7$  with the Global Flux approach using Adams-Moulton method  of order $q=4,6,8$ ($s = q-1$ steps);
\end{itemize}
In some test cases, the numerical results will be compared with non-well balanced standard WENO implememtations that will be called WENO$p$-nWB. In cetrain cases, the discrete steady-state solution is computed using a sweep of the  Adams-Bashforth multi-step solver. This  involves finding the roots of a  degree three  equation which can be done very easily if the super/sub- critical nature of the flow is known (see  e.g.  \cite{chen2017new}).  A similar implementation can also been performed for the  Adams-Moulton method, which however requires  fixed point iterations instead of the direct  in the computation of the cubic roots.

Since in many cases the numerical results given by WENO$p$GF-AB$q$ and WENO$p$GF-AM$q$ are similar, only those corresponding to the latter will be shown, unless otherwise stated. 

\section{Numerical validation}\label{S:numerical}

\subsection{Burgers' equation}

We solve in this section  the scalar Burgers'  equation
\begin{equation}\label{testburgers}
U_t +  \left(\frac{U^2}{2}\right) _x = S(U)H_x,
\end{equation}
We will consider time dependent, steady smooth, and discontinuous solutions.

\subsubsection{Manufactured traveling solution}\label{ss:num_scalar_moving}

As a first safety check, we consider the manufactured solution
$$
 U(x,t)=e^{-(x-x_0-Ct)^2}
 $$
 admitted when   setting   $$ S(U)=U-C, \quad  H(x,t)=e^{-(x-x_0-Ct)^2}$$
 in the Burgers' model. We solve the problem with $C=1$ and $x_0=5$  on the computational domain $[0,1 5]$ m. We compute the error of the solution in $L^1$ norm at time $t=2$ sec for the different WENO reconstruction and multi-step (Adams) methods. Tables 1, 2 and 3 show the errors and measured order of convergence of the different methods.  As one might expect for a time dependent solution, the order of accuracy is given by the minimum between the one of the WENO reconstruction, and that of the ODE integrator. 

\begin{table}[h]
\centering
\caption{Burger's equation. Test \ref{ss:num_scalar_moving}: traveling manufactured solution. $L^1$ error convergence for WENO$p$GF-AM4 and  WENO$p$GF-AB4, $p=3,5,7$}
\label{Table_WENO_4steps_mms}
\begin{tabular}{ |c|c@{\hskip 5pt}c|c@{\hskip 5pt}c|c@{\hskip 5pt}c| }
\hline
 & \multicolumn{2}{|c|}{\textbf{WENO3GF - AB4}} & \multicolumn{2}{|c|}{\textbf{WENO5GF - AB4}} & \multicolumn{2}{|c|}{\textbf{WENO7GF - AB4}} \\
 \hline
 N & Error & Order & Error & Order & Error & Order \\
 \hline
 60  & 1.393e-01 & -    & 1.451e-01 & -    & 1.160e-01 & -   \\
 120 & 1.515e-02 & 3.2  & 1.250e-02 & 3.5  & 1.223e-02 & 3.7 \\
 240 & 1.293e-03 & 3.5  & 8.264e-04 & 3.9  & 8.166e-04 & 3.9 \\
 480 & 1.212e-04 & 3.4  & 5.200e-05 & 4.0  & 5.173e-05 & 4.0 \\
 960 & 1.306e-05 & 3.2  & 3.259e-06 & 4.0  & 3.251e-06 & 4.0 \\
\hline
 & \multicolumn{2}{|c|}{\textbf{WENO3GF - AM4}} & \multicolumn{2}{|c|}{\textbf{WENO5GF - AM4}} & \multicolumn{2}{|c|}{\textbf{WENO7GF - AM4}} \\
\hline
N & Error & Order & Error & Order & Error & Order \\
\hline
60  & 3.661e-02 & -   & 3.659e-02 & -   & 1.273e-01 & -   \\
120 & 5.220e-03 & 2.8 & 3.103e-03 & 3.5 & 9.659e-04 & 3.9 \\
240 & 7.020e-04 & 2.9 & 2.531e-04 & 3.6 & 6.254e-05 & 4.0 \\
480 & 9.016e-05 & 2.9 & 1.816e-05 & 3.8 & 2.465e-07 & 4.0 \\
960 & 1.140e-06 & 2.9 & 1.195e-06 & 3.9 & 1.541e-08 & 4.0 \\
\hline
\end{tabular}
\end{table}

\begin{table}[h]
\centering
\caption{Burger's equation. Test \ref{ss:num_scalar_moving}: traveling manufactured  solution. $L^1$ error convergence for WENO$p$GF-AM6 and  WENO$p$GF-AB6, $p=3,5,7$}
\label{Table_WENO_6steps_mms}
\begin{tabular}{ |c|c@{\hskip 5pt}c|c@{\hskip 5pt}c|c@{\hskip 5pt}c| }
\hline
 & \multicolumn{2}{|c|}{\textbf{WENO3GF - AB6}} & \multicolumn{2}{|c|}{\textbf{WENO5GF - AB6}} & \multicolumn{2}{|c|}{\textbf{WENO7GF - AB6}} \\
 \hline
 N & Error & Order & Error & Order & Error & Order \\
 \hline
 60  & 1.377e-01 & -    & 1.204e-01 & -    & 1.552e-01 & - \\
 120 & 5.331e-03 & 4.6  & 4.081e-03 & 4.8  & 4.306e-03 & 5.1 \\
 240 & 6.934e-04 & 2.9  & 6.571e-05 & 6.0  & 7.551e-05 & 5.8 \\
 480 & 9.127e-05 & 2.9  & 9.642e-07 & 6.0  & 1.217e-06 & 5.9 \\
 960 & 1.149e-05 & 2.9  & 1.905e-08 & 5.6  & 1.905e-08 & 6.0 \\
\hline
 & \multicolumn{2}{|c|}{\textbf{WENO3GF - AM6}} & \multicolumn{2}{|c|}{\textbf{WENO5GF - AM6}} & \multicolumn{2}{|c|}{\textbf{WENO7GF - AM6}} \\
\hline
N & Error & Order & Error & Order & Error & Order \\
\hline
60  & 4.419e-02  & -    &  1.524e-02  & -   & 1.090e-02  & - \\ 
120 & 5.840e-03  & 2.9  &  7.391e-04  & 4.3 & 2.156e-04  & 5.9 \\
240 & 7.320e-04  & 2.9  &  2.278e-05  & 5.0 & 3.622e-06  & 5.9 \\
480 & 9.193e-05  & 2.9  &  6.921e-07  & 5.0 & 5.955e-08  & 5.7 \\
960 & 1.150e-05  & 2.9  &  2.159e-08  & 5.0  & 7.418e-11 & 4.0 \\
\hline
\end{tabular}
\end{table}

\begin{table}[h]
\centering
\caption{Burger's equation. Test \ref{ss:num_scalar_moving}: traveling manufactured  solution. $L^1$ error convergence for WENO$p$GF-AM8 and  WENO$p$GF-AB8, $p=3,5,7$}
\label{Table_WENO_8steps_mms}
\begin{tabular}{ |c|c@{\hskip 5pt}c|c@{\hskip 5pt}c|c@{\hskip 5pt}c| }
\hline
 & \multicolumn{2}{|c|}{\textbf{WENO3GF - AB8}} & \multicolumn{2}{|c|}{\textbf{WENO5GF - AB8}} & \multicolumn{2}{|c|}{\textbf{WENO7GF - AB8}} \\
 \hline
 N & Error & Order & Error & Order & Error & Order \\
 \hline
 60  & 1.2846751 & -    & 4.0572682 & -    & 5.2918144 & -   \\
 120 & 5.750e-03 & 7.8  & 1.8716079 & 1.1  & 1.2131538 & 2.1 \\
 240 & 7.331e-04 & 2.9  & 1.638e-02 & 6.8  & 4.032e-02 & 4.9 \\
 480 & 9.192e-05 & 2.9  & 9.845e-05 & 7.3  & 4.848e-06 & 13.0 \\
 960 & 1.150e-05 & 2.9  & 5.762e-07 & 7.7  & 5.588e-07 & 3.1 \\
\hline
 & \multicolumn{2}{|c|}{\textbf{WENO3GF - AM8}} & \multicolumn{2}{|c|}{\textbf{WENO5GF - AM8}} & \multicolumn{2}{|c|}{\textbf{WENO7GF - AM8}} \\
\hline
N & Error & Order & Error & Order & Error & Order \\
\hline
 60  &   4.419e-02  & -    &  1.524e-02  & -    & 1.090e-02  & -   \\
 120 &   5.840e-03  & 2.9  &  7.391e-04  & 4.3  & 2.156e-04  & 5.9 \\
 240 &   7.320e-04  & 2.9  &  2.278e-05  & 5.0  & 3.622e-06  & 5.9 \\
 480 &   9.193e-05  & 2.9  &  6.921e-07  & 5.0  & 5.955e-08  & 5.7 \\
 960 &   1.150e-05  & 2.9  &  2.159e-08  & 5.0  & 7.418e-11  & 4.0 \\
\hline
\end{tabular}
\end{table}

\begin{table}[h]
\centering
\caption{Burgers' equation. Test \ref{ss:num_scalar_moving}: traveling manufactured  solution. $L^1$ error and convergence rates for WENO$p$-nWB, $p=3,5,7$}
\label{Table_WENOnWB_mms}
\begin{tabular}{|c|c c|c c|c c|}
\hline
 & \multicolumn{2}{c|}{\textbf{WENO3-nWB}} & \multicolumn{2}{c|}{\textbf{WENO5-nWB}} & \multicolumn{2}{c|}{\textbf{WENO7-nWB}} \\
\hline
 N  & Error  & Order & Error  & Order & Error  & Order \\
\hline
 60  & 5.734e-02 & -    & 2.335e-02 & -    & 8.761e-03 & -   \\
 120 & 8.648e-03 & 2.7  & 1.634e-03 & 3.8  & 1.403e-04 & 6.0 \\
 240 & 1.140e-03 & 2.9  & 6.376e-05 & 4.7  & 1.678e-06 & 6.3 \\
 480 & 1.447e-04 & 2.9  & 2.062e-06 & 4.9  & 2.232e-08 & 6.2 \\
 960 & 1.815e-05 & 3.0  & 6.546e-08 & 5.0  & 3.262e-08 & 6.0 \\
\hline
\end{tabular}
\end{table}

\subsubsection{Approximation of a smooth steady state}\label{ss:num_scalar_steady}
We now consider the case 
$$
S(U) = U^2, \quad H(x) = x, $$
 together with its stationary solution
\begin{equation}
U^*(x) = e^x
\end{equation}
in the interval  $[-1, 1]$. For this case, we expect the ODE-based GF formulation to bring considerable accuracy enhancements.
To check this we use the exact point values of the stationary solution as initial data, and then   run until a discrete stationary solution is reached. The error in $L^1$-norm of the discrete steady state  with respect to the analytical one is then measured. Tables \ref{Table_WENO_4steps_steady}-\ref{Table_WENO_8steps_steady} show the errors and empirical order of convergence of the different methods. It can be seen that the order of accuracy as well as the error values for the GF case are uniquely determined by the choice of the multi-step method. This allows for  error reductions of several orders of magnitude on a given mesh, just by modifying the weights of the quadrature formula.

\begin{table}[h!]
\centering
\caption{Burger's equation. Test \ref{ss:num_scalar_steady}:  smooth steady state. Errors in $L^1$ norm and convergence rate for  WENO$p$GF-AM4 and WENO$p$GF-AB4, $p=3,5,7$}
\label{Table_WENO_4steps_steady}
\begin{tabular}{|p{0.5cm}|c@{\hskip 5pt}c|c@{\hskip 5pt}c|c@{\hskip 5pt}c|}
\hline
 & \multicolumn{2}{|c|}{\textbf{WENO3GF - AB4}} & \multicolumn{2}{|c|}{\textbf{WENO5GF - AB4}} & \multicolumn{2}{|c|}{\textbf{WENO7GF - AB4}} \\
\hline
N & Error & Order & Error & Order & Error & Order \\
\hline
20 & 1.380e-03 & - & 1.308e-03 & - & 1.380e-03 & - \\
40 & 9.628e-05 & 3.8 & 9.388e-05 & 3.8 & 9.628e-05 & 3.8 \\
80 & 6.359e-06 & 3.9 & 6.282e-06 & 3.9 & 6.359e-06 & 3.9 \\
160 & 4.086e-07 & 4.0 & 4.086e-07 & 4.0 & 4.086e-07 & 4.0 \\
320 & 2.590e-08 & 4.0 & 2.589e-08 & 4.0 & 2.900e-08 & 4.0 \\
\hline
 & \multicolumn{2}{|c|}{\textbf{WENO3GF - AM4}} & \multicolumn{2}{|c|}{\textbf{WENO5GF - AM4}} & \multicolumn{2}{|c|}{\textbf{WENO7GF - AM4}} \\
\hline
N & Error & Order & Error & Order & Error & Order \\
\hline
20 & 1.112e-04 & - & 1.115e-04 & - & 1.115e-04 & - \\
40 & 7.865e-06 & 3.9 & 7.670e-06 & 3.9 & 7.669e-06 & 3.9 \\
80 & 5.000e-07 & 3.9 & 5.000e-07 & 3.9 & 4.939e-07 & 3.9 \\
160 & 3.152e-08 & 4.0 & 3.152e-08 & 4.0 & 3.155e-08 & 4.0 \\
320 & 1.979e-09 & 4.0 & 1.979e-09 & 4.0 & 1.978e-09 & 4.0 \\
\hline
\end{tabular}
\end{table}

\begin{table}[h!]
\centering
\caption{Burgers' equation. Test \ref{ss:num_scalar_steady}: smooth steady state. Errors in $L^1$ norm and convergence rate for WENO$p$GF-AM6 and WENO$p$GF-AB6, $p=3,5,7$ }
\label{Table_WENO_6steps_steady}
\begin{tabular}{|p{0.5cm}|c@{\hskip 5pt}c|c@{\hskip 5pt}c|c@{\hskip 5pt}c|}
\hline
 & \multicolumn{2}{|c|}{\textbf{WENO3GF - AB6}} & \multicolumn{2}{|c|}{\textbf{WENO5GF - AB6}} & \multicolumn{2}{|c|}{\textbf{WENO7GF - AB6}} \\
\hline
N & Error & Order & Error & Order & Error & Order \\
\hline
20 & 3.917e-05 & - & 3.898e-05 & - & 3.889e-05 & - \\
40 & 7.909e-07 & 5.7 & 7.862e-07 & 5.7 & 7.704e-07 & 5.7 \\
80 & 1.370e-08 & 5.9 & 1.369e-08 & 5.9 & 1.371e-08 & 5.8 \\
160 & 2.256e-10 & 5.9 & 2.256e-10 & 5.9 & 2.255e-10 & 5.9 \\
320 & 3.737e-12 & 5.9 & 3.774e-12 & 5.9 & 3.762e-12 & 5.9 \\
\hline
 & \multicolumn{2}{|c|}{\textbf{WENO3GF - AM6}} & \multicolumn{2}{|c|}{\textbf{WENO5GF - AM6}} & \multicolumn{2}{|c|}{\textbf{WENO7GF - AM6}} \\
\hline
N & Error & Order & Error & Order & Error & Order \\
\hline
20 & 2.202e-06 & - & 2.124e-06 & - & 2.088e-06 & - \\
40 & 3.884e-08 & 5.8 & 3.879e-08 & 5.8 & 3.890e-08 & 5.8 \\
80 & 6.459e-10 & 5.9 & 6.455e-10 & 5.9 & 6.453e-10 & 5.9 \\
160 & 1.035e-11 & 6.0 & 1.035e-11 & 6.0 & 1.035e-11 & 6.0 \\
320 & 4.159e-16 & - & 2.331e-15 & - & 1.993e-15 & - \\
\hline
\end{tabular}
\end{table}

\begin{table}[h!]
\centering
\caption{Burgers' equation. Test \ref{ss:num_scalar_steady}: smooth steady state. Errors in $L^1$ norm and convergence rate for WENO$p$GF-AM8 and WENO$p$GF-AB8, $p=3,5,7$}
\label{Table_WENO_8steps_steady}
\begin{tabular}{|p{0.5cm}|c@{\hskip 5pt}c|c@{\hskip 5pt}c|c@{\hskip 5pt}c|}
\hline
 & \multicolumn{2}{|c|}{\textbf{WENO3GF - AB8}} & \multicolumn{2}{|c|}{\textbf{WENO5GF - AB8}} & \multicolumn{2}{|c|}{\textbf{WENO7GF - AB8}} \\
\hline
N & Error & Order & Error & Order & Error & Order \\
\hline
20 & 1.267e-06 & - & 2.071e-05 & - & 1.694e-05 & - \\
40 & 6.702e-09 & 7.6 & 5.353e-04 & - & 5.875e-05 & - \\
80 & 3.056e-11 & 7.8 & 1.338e-06 & 8.6 & 4.962e-06 & 3.6 \\
160 & 8.677e-14 & 8.5 & 1.160e-13 & - & 3.425e-08 & 7.2 \\
320 & - & - & - & - & 6.657e-14 & - \\
\hline
 & \multicolumn{2}{|c|}{\textbf{WENO3GF - AM8}} & \multicolumn{2}{|c|}{\textbf{WENO5GF - AM8}} & \multicolumn{2}{|c|}{\textbf{WENO7GF - AM8}} \\
\hline
N & Error & Order & Error & Order & Error & Order \\
\hline
20 & 4.792e-08 & - & 4.765e-08 & - & 4.718e-08 & - \\
40 & 2.317e-10 & 7.7 & 2.314e-10 & 7.7 & 2.312e-10 & 7.7 \\
80 & 9.666e-13 & 7.9 & 9.688e-13 & 7.9 & 9.828e-13 & 7.9 \\
160 & 1.346e-16 & - & 1.429e-16 & - & 1.540e-16 & - \\
320 & - & - & - & - & - & - \\
\hline
\end{tabular}
\end{table}

\begin{table}[h!]
\centering
\caption{Burgers' equation. Test \ref{ss:num_scalar_steady}: smooth steady state. Errors in $L^1$ norm and convergence rates for WENO$p$-nWB, with $p=3,5,7$}
\label{Table_WENOnWB_staedy}
\begin{tabular}{|c|c c|c c|c c|}
\hline
 & \multicolumn{2}{c|}{\textbf{WENO3-nWB}} & \multicolumn{2}{c|}{\textbf{WENO5-nWB}} & \multicolumn{2}{c|}{\textbf{WENO7-nWB}} \\
\hline
 N  & Error  & Order & Error  & Order & Error  & Order \\
\hline
20  & 9.531e-03 & -   & 9.387e-05 & -   & 5.073e-07 & -   \\
40  & 1.166e-03 & 3.0 & 3.468e-06 & 4.8 & 5.633e-09 & 6.5 \\
80  & 1.431e-04 & 3.0 & 1.112e-07 & 5.0 & 4.571e-11 & 6.9 \\
160 & 1.767e-05 & 3.0 & 3.503e-09 & 5.0 & 1.253e-13 & 8.5 \\
320 & 2.193e-06 & 3.0 & 1.094e-10 & 5.0 & -         & -   \\
\hline
\end{tabular}
\label{TableWENOComparison}
\end{table}

\subsubsection{Highly oscillatory solution}\label{ss:num_oscil}
As in \cite{ParesPulido} we consider now \eqref{testburgers} with an oscillatory  function $H$:
\begin{equation} \label{H_osc}
S(U) = U^2, \quad H(x) = x + 0.1 \sin(100x),
\end{equation}
together with the stationary solution
$$
U^*(x) = e^{H(x)}$$
in the interval $[-1,1]$
(see Figure \ref{fig-stBHo}). We take as initial condition the stationary solution and first a 100-cell mesh is considered, so that the period of the oscillations is close to $\Delta x$.
In Figure \ref{Fig_test_oscill} (top-left) the numerical solutions obtained at time $t = 1 $ sec with WENO3GF-AM$q$, $q = 4,6,8$ and WENO$p$-nWB, $p = 3, 5$ are compared with the exact stationary solution. As expected the well-balanced methods behave better than the non-WB ones even when their order of accuracy is lower. Nevertheless, the choice $q = 8$ is surprisingly worse than $q=4$ and $q=6$. This is a consequence of the large stencil introduced
by the eighth order method, which is not suited to approximate
the highly oscillatory solution considered.
To verify this   we consider a slight refinement of the mesh.
In top right picture of Figure \ref{Fig_test_oscill} 
we thus report the results obtained using 150 points.
It can be seen that all methods give improved results. In this case   WENO3GF-AM8 is   the best one as shown by the  zoom in the bottom picture  in Figure \ref{Fig_test_oscill}.\\

\begin{figure}
\centering
\includegraphics[width=0.485\textwidth]{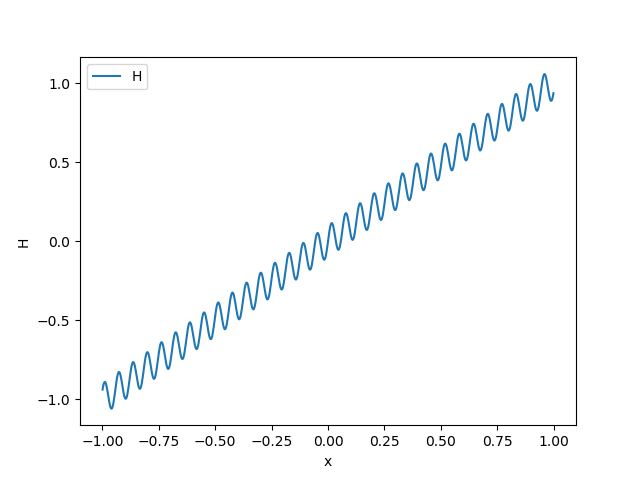}
\caption{Burgers' equation. Test \ref{ss:num_oscil}: highly oscillatory solution. Graph of the function $H$ \label{fig-stBHo}}
\end{figure}



\begin{figure}
\includegraphics[width=0.55\textwidth]{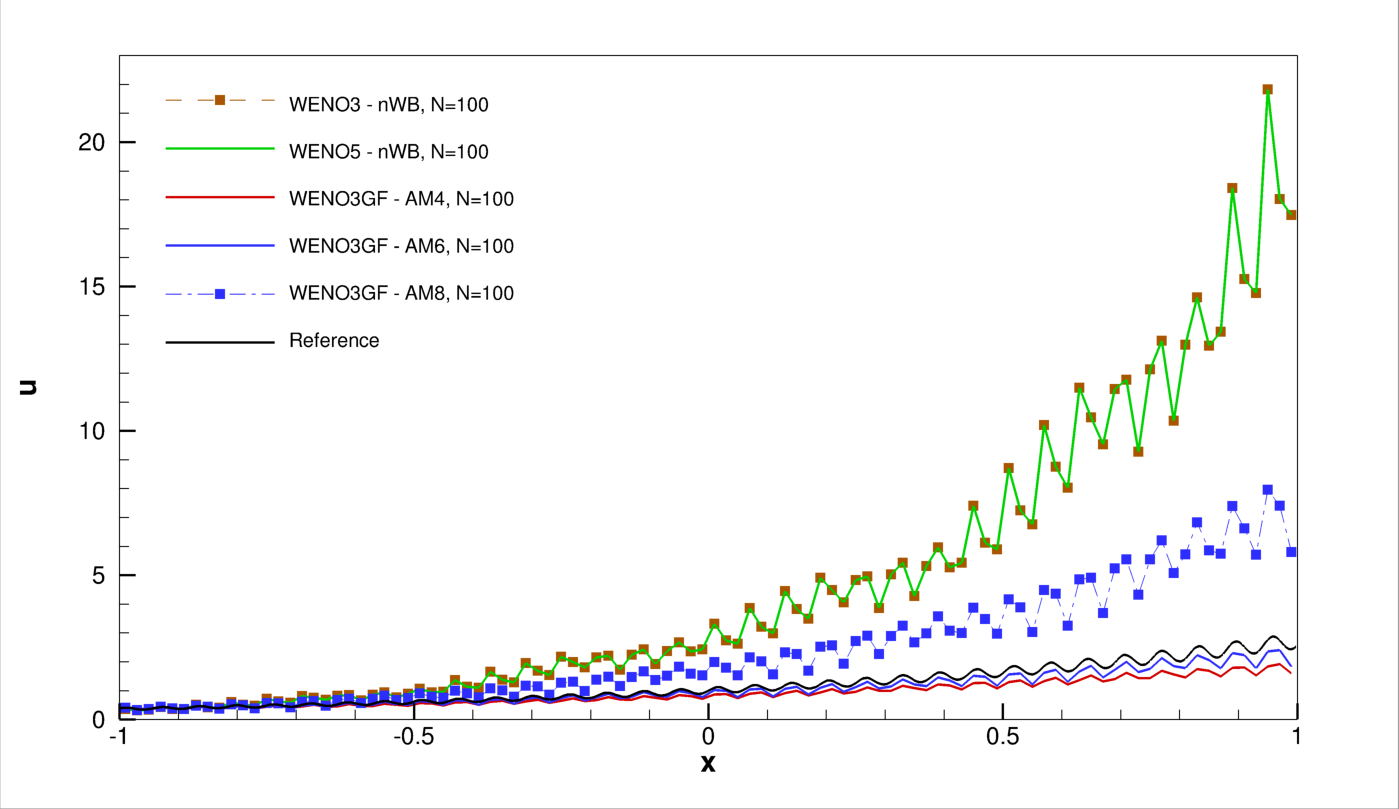}
\includegraphics[width=0.55\textwidth]{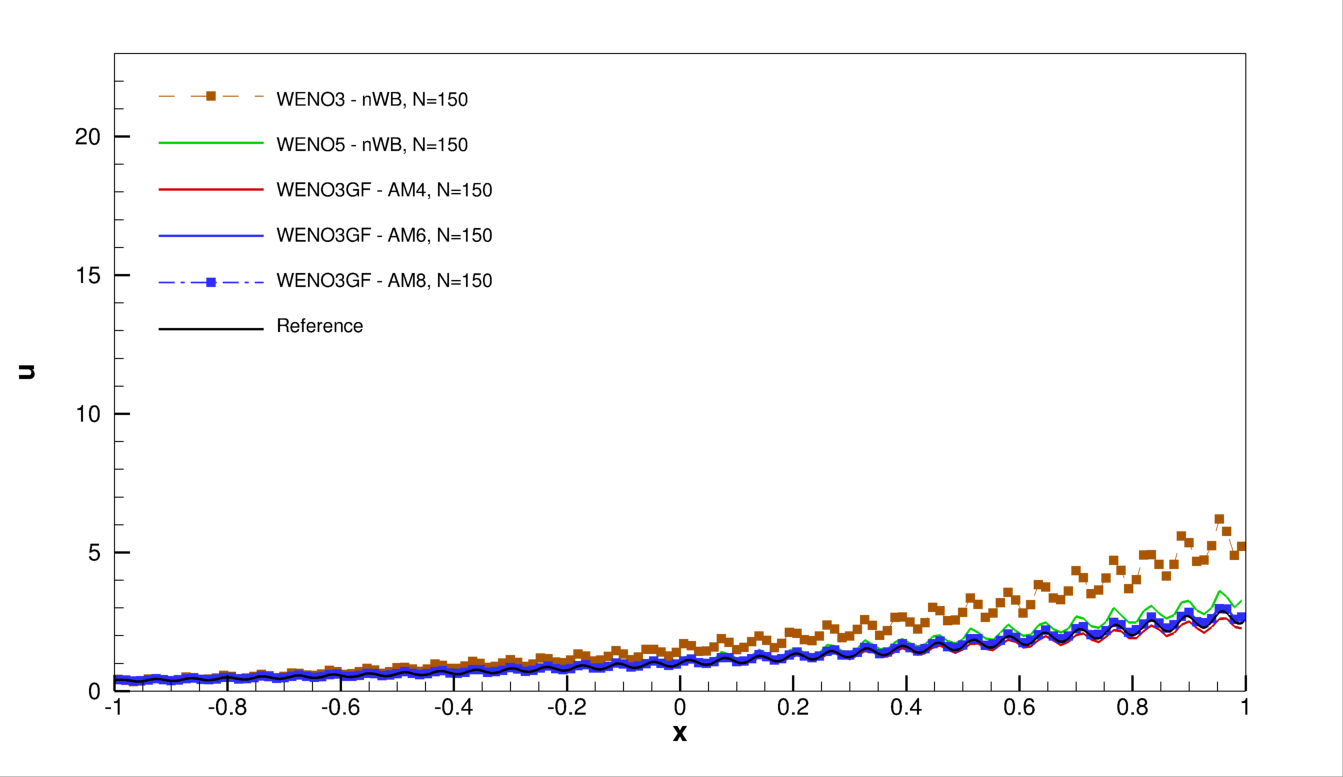}

\centering\includegraphics[width=0.5\textwidth]{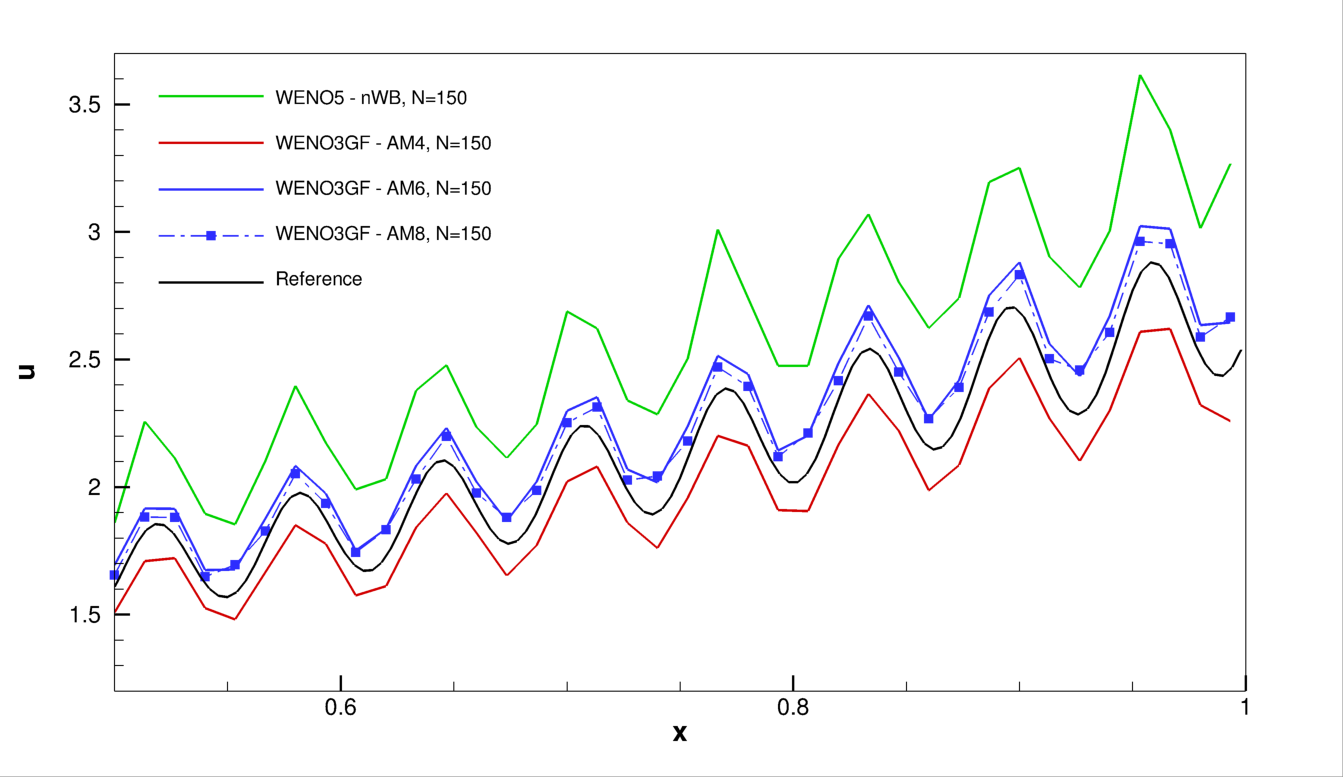}
\caption{Burgers' equation. Test \ref{ss:num_oscil}: highly oscillatory solution. Top:   discrete steady states with different schemes for  $N=100$ (left) and $N=150$ (Right). 
Bottom: zoom of the last oscillations for $N=150$. \label{Fig_test_oscill}}
\end{figure}

In order to see how the different numerical methods handle  perturbations in the next experiment we take initial data
$$
U_j = U^*_j + 0.2 \chi_{[-0.7, -0.5]}(x_j), \quad j=1, \dots, 100,
$$
where $\{ U^*_j \}$ is the discrete stationary solution reached by each method in the previous experiment with $100$ points, and $\chi_{[a,b]}$ represents the characteristic function of the interval $[a,b]$: see Figure \ref{fig-pBHo-1} (left). The differences between the numerical solutions at time $t = 0.7$ sec, and the underlying discrete stationary solutions are compared with a reference solution on the right in  Figure \ref{fig-pBHo-1}. It can be seen that  on a coarse mesh the non-WB   method
exaggerate considerably the evolution of the perturbation,
despite of the fact that the underlying steady state is its own
discrete stationary state. 

\begin{figure}
\includegraphics[width=0.5\textwidth]{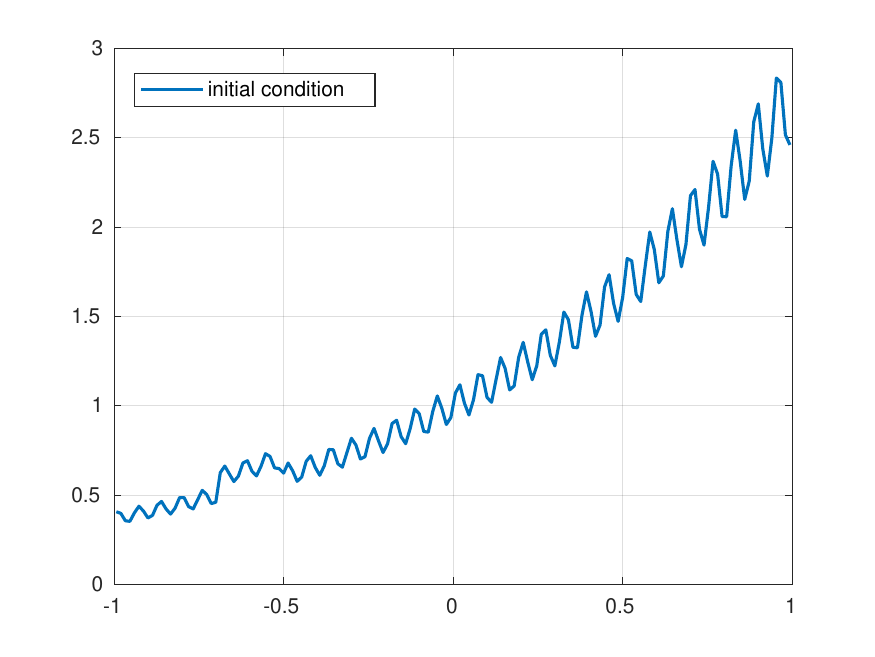}\includegraphics[width=0.5\textwidth]{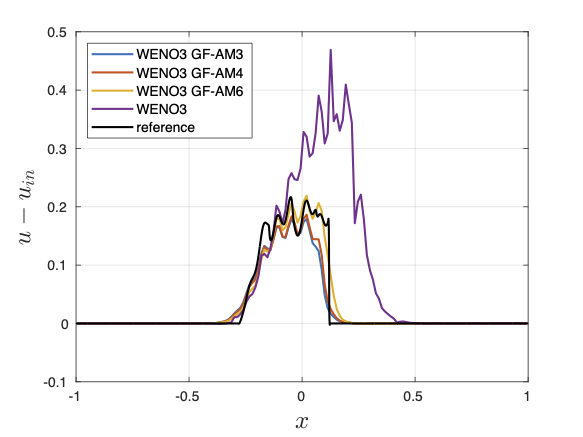}
\caption{Burgers' equation. Test \ref{ss:num_oscil}: highly oscillatory solution. Perturbed steady state ($N=150$). Left: initial solution. Right: evolution of the perturbation of different schemes. \label{fig-pBHo-1}}
\end{figure}

\subsubsection{Discontinuous data}

Let us consider now \eqref{testburgers} with a piecewise continuous function $H$:
\begin{equation} \label{H_disc_two}
H(x) = \begin{cases} 0.1 x & \text{ if $x \leq 0$;} \\
0.5+ x & \text{if $0 < x\leq 0.5$;} \\
0.9 +x & \text{otherwise.}
\end{cases}
\end{equation}
We consider again the interval $[-1,1]$ and we take as initial condition the stationary solution
\begin{equation}\label{stsolBHd}
u(x) = \begin{cases} 
e^{0.1 x} & \text{ if $x \leq 0$;} \\
e^{0.5 + x} & \text{ if $0< x\leq 0.5 $;} \\
e^{0.9 +x} & \text{otherwise.}
\end{cases}
\end{equation}

We deliberately chose two discontinuities on H. The first one, placed at $x = 0$ is aligned with a mesh point while the second one, placed at $x = 0.5$, is positioned at a cell face. 
Using the stationary solution as the initial condition over the interval $[-1,\; 1]$ on a mesh of $N=100$ points, we examine the impact on discontinuity preservation. As illustrated in Figure \ref{fig:two_disc}  WENO3GF-AMq , $q=4,6,8$ schemes successfully preserve both discontinuities irrespective of their location while WENO3-nWB smooths out the second discontinuity.

\begin{figure}
\centering\includegraphics[width=0.5\textwidth]{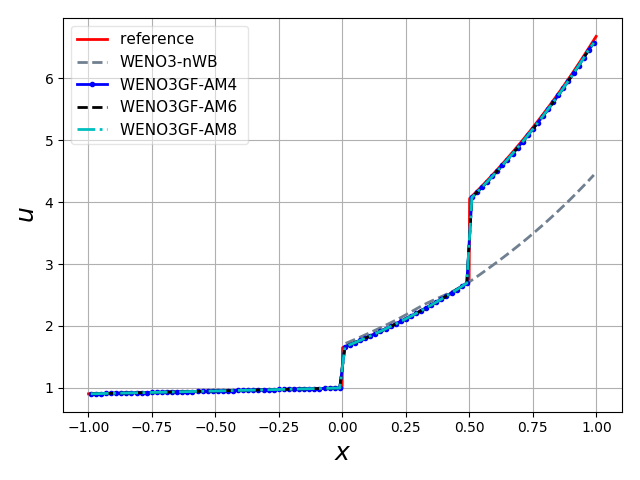}
\caption{Burgers' equation. Test \ref{ss:num_oscil}: discontinuous data. Preservation of two discontinuities at $t=0.2\;$ sec on a mesh of $N=100$ points}
\label{fig:two_disc}
\end{figure}

We now focus on the perturbation of the discontinuous solution. For clarity, we simplify the set up by retaining only a sigle discontinuity in $H$ so that function $H$ reads as:  

\begin{equation} \label{H_disc_one}
H(x) = \begin{cases} 0.1 x & \text{ if $x \leq 0$;} \\
0.9 +x & \text{otherwise;}
\end{cases}
\end{equation}
We consider again the interval $[-1,1]$ and we take as initial condition the stationary solution  with a small perturbation :
$$
\tilde u_0(x) = u(x) + 0.3 e^{-200(x+1/2)^2},
$$
with $ u(x) = e^{H}$, see also Figure \ref{fig:initial_disc}. The numerical solutions at times $t = 0.3$ and $0.5$ obtained with WENO3GF-AM$q$, $q = 4,6$  using a mesh with $N = 100$ and $300$ points are compared with  the numerical solutions of the  WENO3-nWB using the finer mesh,
and with a reference  obtained with the WENO5-nWB scheme with upwind fluxes and 5000 points. 
As it can be seen,   the non well balanced method
provides a wrong prediction of the interaction with the discontinuity,
while the  well balanced approach proposed here with the correction
to account for singularities allows to obtain an accurate result even on a very coarse mesh.

\begin{figure}
\centering\includegraphics[width=0.5\textwidth]{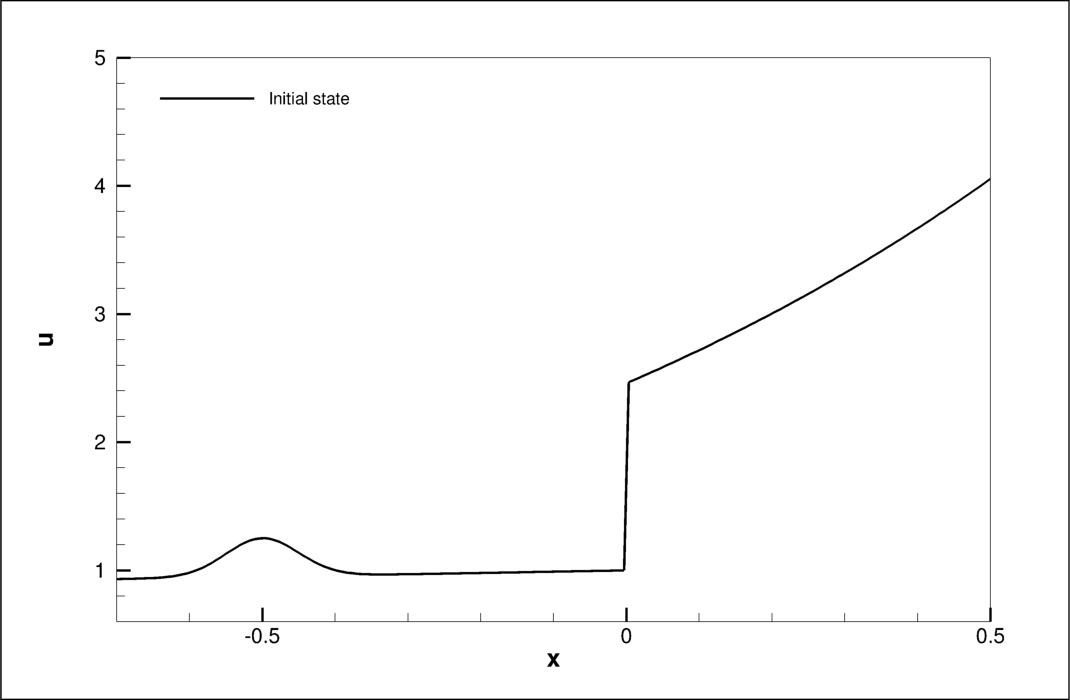}
\caption{Burgers' equation. Test \ref{ss:num_oscil}: discontinuous data. Perturbation of an isolated discontinuity: initial state.}
\label{fig:initial_disc}
\end{figure}

\begin{figure}
\begin{minipage}{0.5\textwidth}
\includegraphics[width=1.05\textwidth]{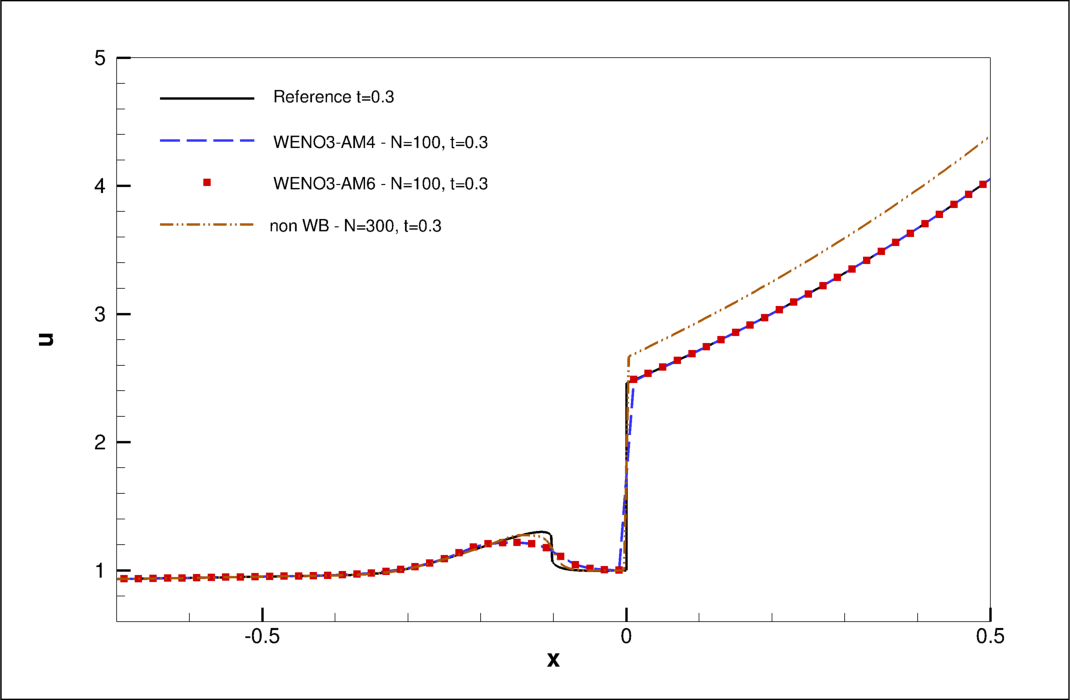}
\includegraphics[width=1.05\textwidth]{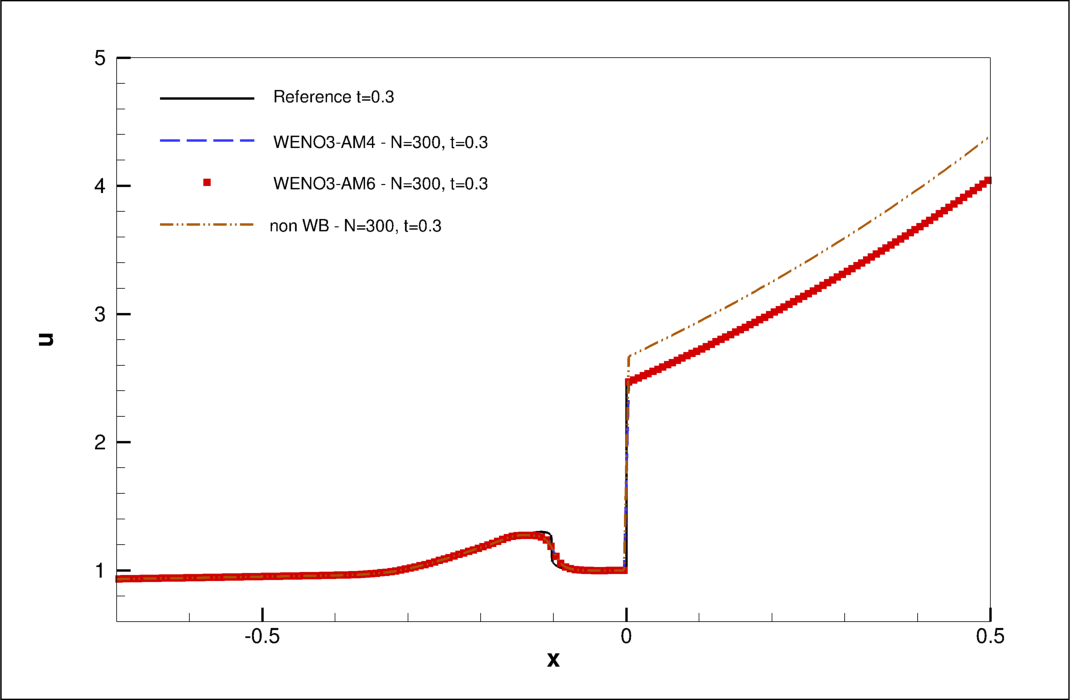}
\end{minipage}\hfill
\begin{minipage}{0.5\textwidth}
\includegraphics[width=1.05\textwidth]{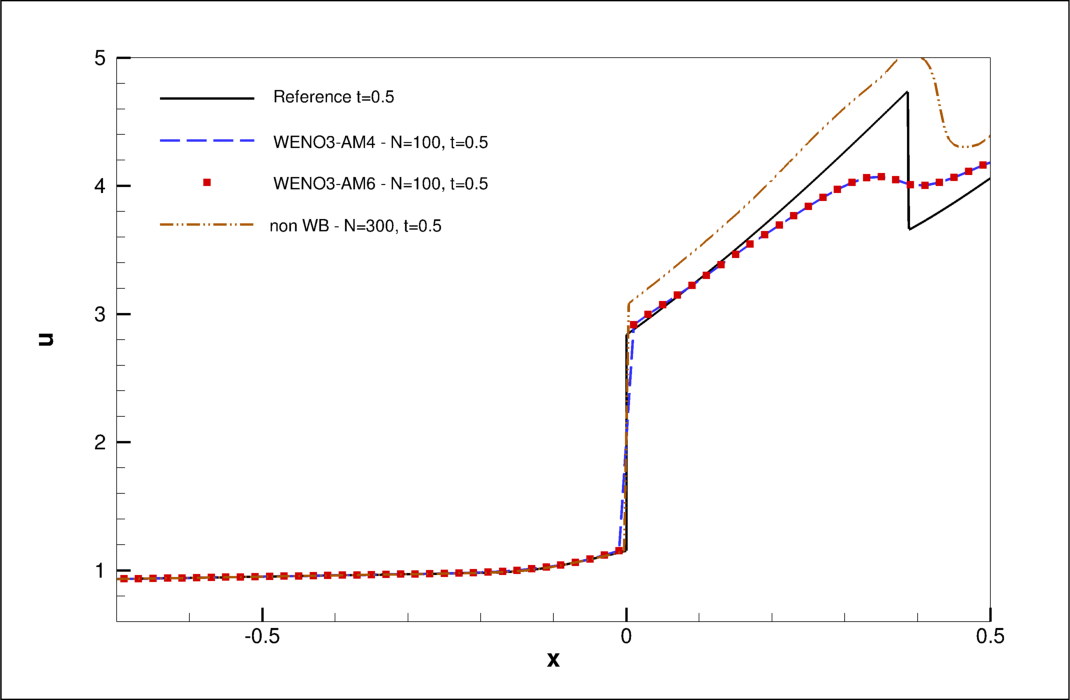}
\includegraphics[width=1.05\textwidth]{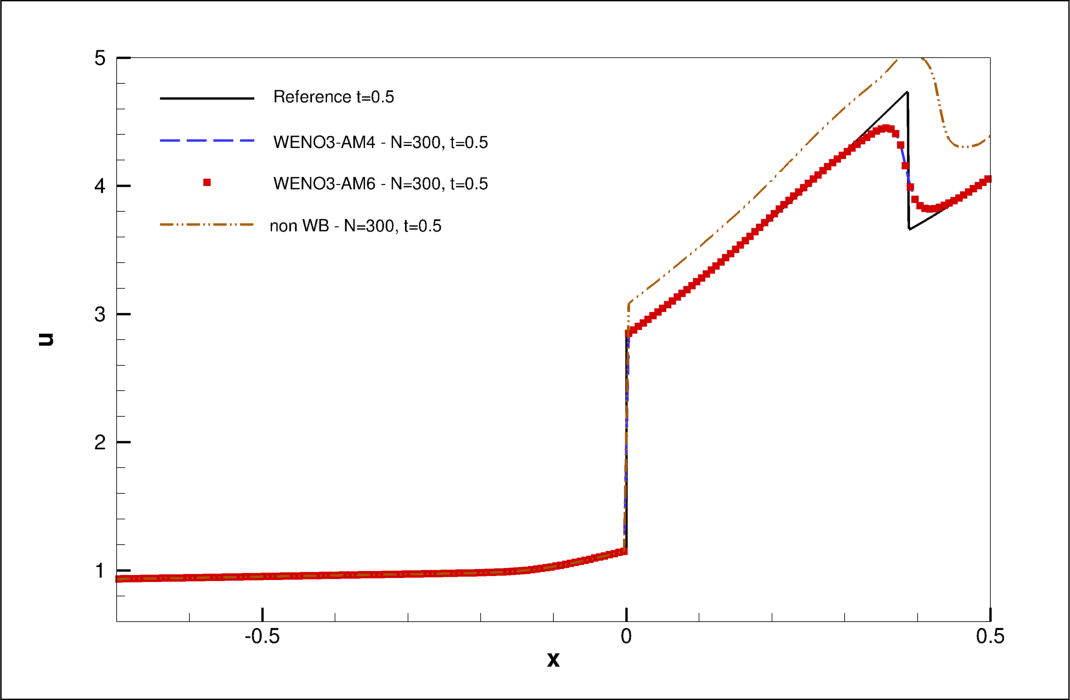}
\end{minipage}
\caption{Burgers' equation. Test \ref{ss:num_oscil}: discontinuous data. Perturbation of an isolated discontinuity: solutions obtained with WENO3GF-AM$q$, $q = 4,6$ at time $t=0.3$ (left) and $t=0.5$ (right) using $N=100$ (top) or $N = 300$ points (down) compared with a reference solution and with the numerical results given by WENO3-nWB using 300 points.}
\end{figure}

\subsection{Shallow water equations}
We now consider the shallow water system \eqref{sbl}-\eqref{sw}  and  verify the correct implementation and convergence of our methods using classical benchmarks that involve static and moving equilibria.


\subsubsection{Approximation of the lake at rest solution over a discontinuous bottom} \label{ss:lake_at_rest}
First, we focus on the lake at rest solution characterized by the initial data:
\begin{equation}\label{lake_at_rest_ss}
h(x,0) = 2-H(x), \;\; q(x,0) = 0
\end{equation}
with the bottom depth given by:
$$
H(x)= \begin{cases}
0.05 \sin(x - 12.5)  \exp\left(1 - (x - 12.5)^2\right), & x\leq 14 \\
0.05 \sin(x - 12.5)  \exp\left(1 - (x - 12.5)^2\right)-0.1, & x> 14.
\end{cases}
$$
in a computational domain of $[0, \; 25]$ m with subcritical inlet/outlet at the left and right boundary, respectively. The bathymetry consists of a sinusoidal bump with  a step of $0.1 \;$m  placed at $x=14 \; $m. Our goal in this test case is to demonstrate the exact preservation of water at rest solutions when we consider a polynomial approximation of H: see  section \ref{sec_lakeatrest}. Similar results are obtained when H is approximated using the selected multistep method. 
To do that, \eqref{lake_at_rest_ss} is taken as initial condition and the numerical methods are run until time $t = 2$ sec. The differences between the numerical solutions obtained with WENO3GF-AM$q$, $q = 4,6,8$ and the initial condition are shown in Table \ref{Table_SW_LAR_pres}: as it can be seen, the initial condition is preserved up to machine accuracy.

\begin{table}[h]
\centering
\caption{Shallow water equations. Test \ref{ss:lake_at_rest}: lake at rest solution. Errors in $L^1$ norm for WENO3GF-AM$q$, $q = 4,6,8$ at time $t=2$ sec.}
\label{Table_SW_LAR_pres}
\begin{tabular}{ |p{0.5cm}|c|c|c|  } 
\hline
N & \textbf{WENO3GF-AM4}   & \textbf{WENO3GF-AM6} & \textbf{WENO3GF-AM8}  \\
 \hline 
 25  & 1.776e-15 &   6.661e-15  &  2.042e-14\\
 50  & 1.476e-14 &   3.586e-14  &  3.907e-14\\
 100 & 2.953e-14 &   7.382e-14  &  8.387e-14\\
 200 & 6.586e-14 &   1.691e-13  &  2.054e-13\\
 400 & 1.426e-13 &   3.784e-13  &  4.344e-13\\
\hline
\end{tabular}
\end{table}

\subsubsection{Perturbation of the lake at rest solution over a discontinuous bottom} \label{ss:lake_at_rest_pert}

In this test case, we consider an initial condition which is obtained by adding a small perturbation to the initial solution considered in \ref{ss:lake_at_rest}. The perturbation is of size $\Delta h=10^{-4}$ and is added in $[7.5,\; 9.5]\;$m. Figure \ref{fig:lake_pert}, shows the initial perturbation on the left and the numerical solutions obtained with WENO3GF-AM$q$, $q = 4,6,8$ at time $t=1\;$sec using $N=100$ points. They are compared with the numerical solution given by WENO3-nWB and a reference solution computed with WENO7GF-AM8. As it can be seen, the GF methods capture correctly the evolution of the initial perturbation while standard WENO3 give errors that are several orders of magnitude bigger than the initial perturbation. 

\begin{figure}
\includegraphics[width=0.5\linewidth]{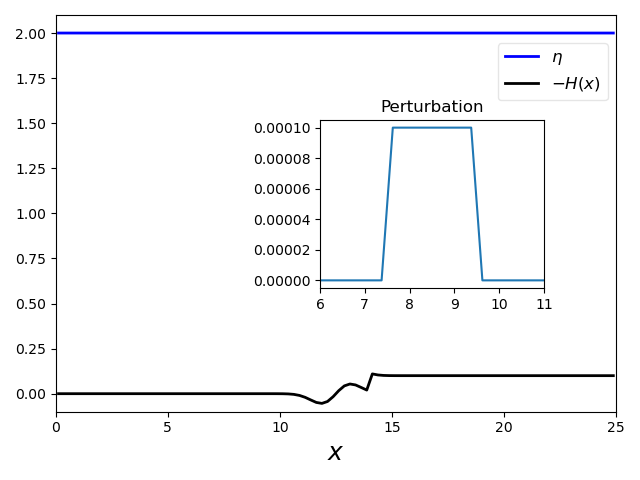}
\includegraphics[width=0.5\linewidth]{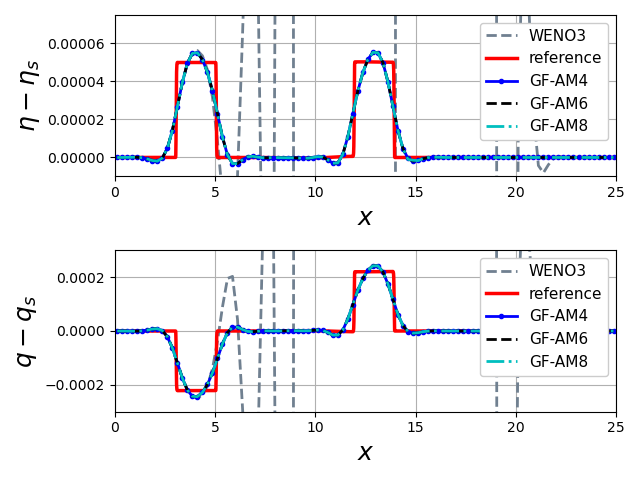}    
    \caption{Shallow water equations. Test \ref{ss:lake_at_rest_pert}: small perturbation on a lake at rest. Left: initial condition. Right: differences between the underlying stationary solution and the numerical solutions obtained with WENO3GF-AM$q$, $q = 4,6,8$ using $N = 100$ points at time $t=1\; sec$ compared with a reference solution and the numerical solutions given by WENO3-nWB. Top: $\eta$. Down: $q$. }
    \label{fig:lake_pert}
\end{figure}

We now change the size of the perturbation, and we place a Riemann problem instead.  $\Delta h$ is $1.0$ and it is added at the left of $x=12\;m$: see Fig. \ref{fig:lake_riemann} on the left. The numerical solutions for different schemes, after $t=1 \; sec$, are plotted on the right. Again, the evolution of the perturbation is better captured by the WB methods as one may expect, especially due to the  presence of the discontinuity in $H$.

\begin{figure}
\includegraphics[width=0.5\linewidth]{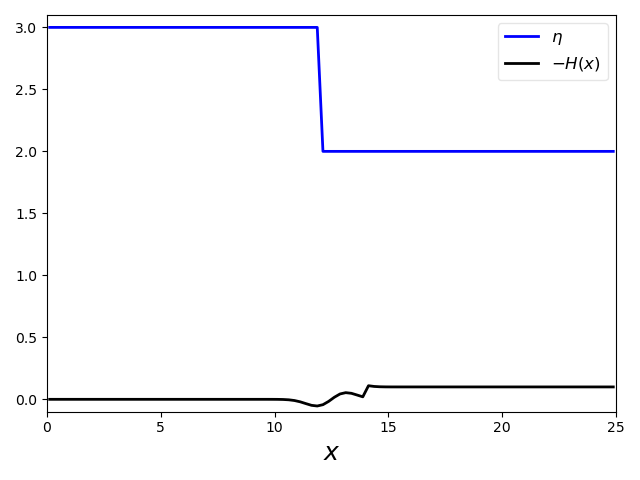}
\includegraphics[width=0.5\linewidth]{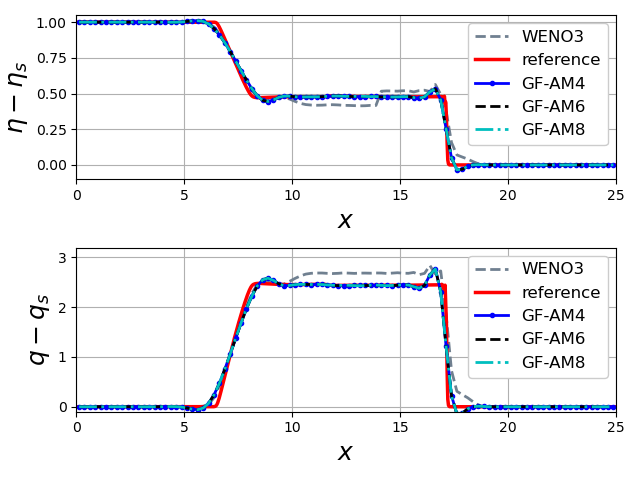}    
    \caption{Shallow water equations. Test \ref{ss:lake_at_rest_pert}: big perturbation on a lake at rest. Left: initial condition. Right: differences between the underlying stationary solution and the numerical solutions obtained with WENO3GF-AM$q$, $q = 4,6,8$ using $N = 100$ points at time $t=1\; sec$ compared with a reference solution and the numerical solutions given by WENO3-nWB. Top: $\eta$. Down: $q$.  }
    \label{fig:lake_riemann}
\end{figure}
\noindent


\subsubsection{Approximation of subcritical and supercritical stationary solutions}\label{ss:num_subcr}

We consider the shallow water system \eqref{sbl}-\eqref{sw} with the bottom depth given by:
$$
H(x) = 0.05 \sin(x - 12.5)  \exp\left(1 - (x - 12.5)^2\right). 
$$
As initial condition, we take the subcritical stationary solution characterized by
$$
q^*(0) = 4.42, \quad h^*(0) = 2,$$
whose values at the points are computed using the relations \eqref{ssSW}. The numerical methods then run  until a discrete steady state is reached  up to machine accuracy. The boundary conditions used are: 
$$
q(0,t) =4.42, \;\; h(25,t)=2.
$$
Figure \ref{fig:steady} (right) shows the discrete stationary solution reached by WENO3GF-AM4 using $N = 100$ points. 
\begin{figure}
\includegraphics[width=0.5\linewidth]{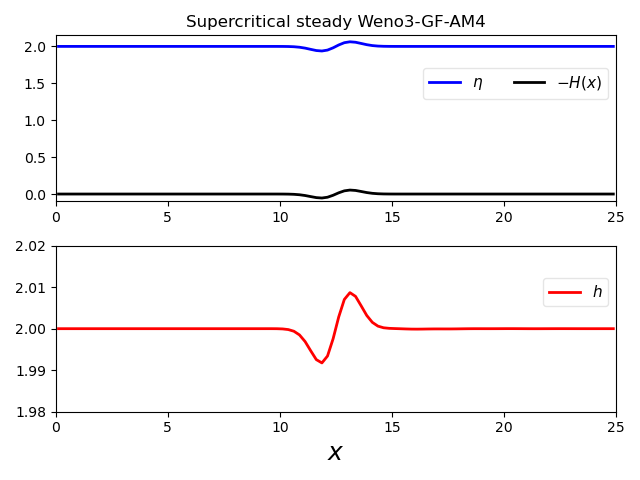}\includegraphics[width=0.5\linewidth]{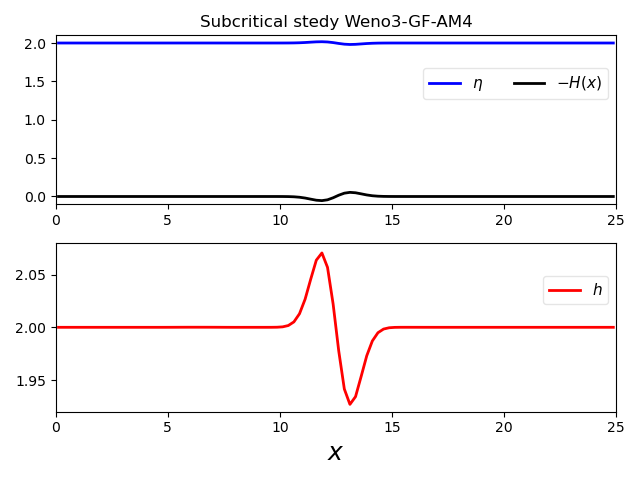}    
    \caption{Shallow water equations. Test \ref{ss:num_subcr}: discrete steady state obtained with WENO3GF-AM4 using $N = 100$ points. Left: supercritical case. Right: subcritical case. Top: free surface and bottom. Down: $h$.}
    \label{fig:steady}
\end{figure}

Table \ref{TableSubcr} shows the errors in $L^1$ norm and the empirical order of convergence of the different methods using WENO3GF-AM$q$, $q = 4,6,8$.  For reference, the errors corresponding to WENO3-nWB are also shown. We should mention that, as observed in the case of Test \ref{ss:num_scalar_steady}, the empirical orders and errors for WENO$p$GF-AB$q$ are independent of $p$, which is why we are only presenting the table for WENO3. As it can be seen, we confirm that, also for a system of equations, the order of accuracy for stationary solutions is the one of the multi-step method.   

\begin{table}[h]
\centering
\caption{Shallow water equation. Test \ref{ss:num_subcr}: subcritical steady state. Errors in $L^1$ norm and convergence rate for WENO3GF-AM$q$, $q=4,5,8$ (left) and WENO3-nWB (right).}
\label{TableSubcr}
\begin{minipage}{0.5\textwidth}
\centering
\begin{tabular}{ |p{0.5cm}|c@{\hskip 5pt}c|c@{\hskip 5pt}c|c@{\hskip 5pt}c|  } 
\hline
 & \multicolumn{2}{|c|}{4 steps} & \multicolumn{2}{|c|}{6 steps} & \multicolumn{2}{|c|}{8 steps} \\
 \hline
 N & Error  & Order & Error & Order & Error & Order \\
 \hline
 \multicolumn{7}{|c|}{\textbf{WENO3GF - AM}} \\
 \hline
 25  & 2.507e-03 & -    & 3.510e-03   & -    &  4.861e-03  & -\\
 50  & 1.032e-04 & 4.6  & 3.700e-04   & 3.2  &  8.815e-04  & 2.5\\
 100 & 1.209e-05 & 3.1  & 1.098e-05   & 5.0  &  1.481e-05  & 5.9\\
 200 & 8.110e-07 & 3.9  & 2.298e-07   & 5.6  &  9.337e-08  & 7.3\\
 400 & 5.279e-08 & 4.0  & 3.817e-09   & 5.9  &  4.181e-10  & 7.8\\
 \hline
 \multicolumn{7}{|c|}{\textbf{WENO3GF - AB}} \\
 \hline
 25  & 1.033e-02 & -    &  7.332e-02  & -    &  1.555e-01   & -\\
 50  & 1.990e-03 & 2.4  &  6.356e-03  & 3.5  &  1.837e-02   & 3.0\\
 100 & 1.326e-04 & 3.9  &  1.583e-03  & 2.0  &  4.255e-04   & 5.4\\
 200 & 1.054e-05 & 3.7  &  2.940e-05  & 5.7  &  2.851e-06   & 7.2\\
 400 & 6.937e-07 & 3.9  &  4.185e-09  & 6.3  &  8.015e-08   & 8.0\\
\hline
\end{tabular}
\end{minipage}
\hfill
\begin{minipage}{0.3\textwidth}
\centering
\begin{tabular}{ |p{0.5cm}|c@{\hskip 5pt}c| } 
\hline
\multicolumn{3}{|c|}{\textbf{WENO3-nWB}} \\ 
\hline
 N & Error  & Order \\
 \hline
 25   & 2.157e-01  & -  \\
 50   & 1.349e-02  & 4.0  \\
 100  & 1.840e-03  & 2.9  \\
 200  & 2.308e-04  & 3.0  \\
 400  & 2.882e-05  & 3.0  \\
\hline
\end{tabular}
\end{minipage}
\end{table}

Next, we consider as a safety check the same problem  with a reversed flow direction.
Note that we do not change the ODE weights in the scheme to integrate the source term.
Table \ref{TableSubcr_re} confirms that, despite their non-symmetric structure, the GF schemes obtained with both multi-step methods are able to achieve the expected order of accuracy.   \\ 

The same   tests  are performed  for the supercritical stationary solution characterized by 
$$
h^*(0) =2, \quad q^*(0)=24.
$$
The left picture on Figure \ref{fig:steady}  shows the discrete stationary solution reached by WENO3GF-AM4 using $N = 100$ points. 
The convergence results are reported on  table \ref{TableSuper}, and show the same behaviour also observed for the subcritical case, while Table \ref{TableSuper_re} reports the same for the reversed supercritical flow confirming once again that the results are independent of the flow direction.

\begin{table}[h]
\centering
\caption{Shallow water equation. Test \ref{ss:num_subcr}: subcritical reversed steady state. Errors in $L^1$ norm and convergence rate for WENO3GF-AM$q$, $q=4,5,8$ (left) and WENO3-nWB (right).
\label{TableSubcr_re}}
\begin{minipage}{0.5\textwidth}
\centering
\begin{tabular}{ |p{0.5cm}|c@{\hskip 5pt}c|c@{\hskip 5pt}c|c@{\hskip 5pt}c|  } 
\hline
 & \multicolumn{2}{|c|}{4 steps} & \multicolumn{2}{|c|}{6 steps} & \multicolumn{2}{|c|}{8 steps} \\
 \hline
 \multicolumn{7}{|c|}{\textbf{WENO3GF - AM}} \\
  \hline
 N & Error  & Order & Error & Order & Error & Order \\
 \hline
 50  & 1.033e-04 & -    & 3.633e-04   & -    &  8.636e-04  & -\\
 100 & 1.217e-05 & 3.1  & 1.098e-05   & 5.0  &  1.470e-05  & 5.9\\
 200 & 8.151e-07 & 3.9  & 2.289e-07   & 5.6  &  9.326e-08  & 7.3\\
 400 & 5.317e-08 & 3.9  & 3.809e-09   & 5.9  &  4.153e-10  & 7.8\\
 \hline
 \multicolumn{7}{|c|}{\textbf{WENO3GF - AB}} \\
 \hline
 50  & 1.119e-03 & -    &  5.626e-03  & -    &  1.805e-02   & -  \\
 100 & 1.329e-04 & 3.8  &  2.327e-04  & 5.0  &  4.228e-04   & 5.4\\
 200 & 1.056e-05 & 3.7  &  4.745e-06  & 5.6  &  2.849e-06   & 7.2\\
 400 & 6.989e-07 & 3.9  &  8.351e-08  & 5.8  &  1.300e-08   & 7.8 \\
\hline
 \end{tabular}
\end{minipage}
\hfill
\begin{minipage}{0.3\textwidth}
\centering
\begin{tabular}{ |p{0.5cm}|c@{\hskip 5pt}c| } 
\hline
\multicolumn{3}{|c|}{\textbf{WENO3-nWB}} \\ 
\hline
 N & Error  & Order \\
 \hline
 50   & 4.594e-02  & -  \\
 100  & 7.163e-03  & 2.7  \\
 200  & 1.620e-03  & 2.1  \\
 400  & 2.380e-04  & 2.8  \\
\hline
\end{tabular}
\end{minipage}
\end{table}

\noindent

\begin{table}[h]
\centering
\caption{Test \ref{ss:num_subcr}: Supercritical flow. Errors in $L^1$ norm and convergence rate for WENO3 \label{TableSuper}}
\begin{minipage}{0.5\textwidth}
\centering
\begin{tabular}{ |p{0.5cm}|c@{\hskip 5pt}c|c@{\hskip 5pt}c|c@{\hskip 5pt}c|  } 
\hline
 & \multicolumn{2}{|c|}{4 steps} & \multicolumn{2}{|c|}{6 steps} & \multicolumn{2}{|c|}{8 steps} \\
 \hline
 N & Error  & Order & Error & Order & Error & Order \\
 \hline
 \multicolumn{7}{|c|}{\textbf{WENO3GF - AM}} \\
 \hline
 25  & 4.270e-04 & -    & 6.847e-04   & -    &  1.049e-03  & -\\
 50  & 4.181e-05 & 3.4  & 1.296e-04   & 2.4  &  3.310e-04  & 1.6\\
 100 & 4.113e-06 & 3.3  & 4.210e-06   & 4.9  &  5.289e-06  & 6.0\\
 200 & 2.749e-07 & 3.9  & 8.458e-08   & 5.6  &  3.524e-08  & 7.2\\
 400 & 1.778e-08 & 4.0  & 1.411e-09   & 5.9  &  1.628e-10  & 7.8\\
 \hline
 \multicolumn{7}{|c|}{\textbf{WENO3GF - AB}} \\
 \hline
 25  & 2.166e-03 & -    &  5.047e-03  & -    &  1.226e-02  & -\\
 50  & 6.085e-04 & 1.8  &  2.124e-03  & 1.2  &  7.079e-03  & 0.7\\
 100 & 5.089e-05 & 3.6  &  8.074e-05  & 4.7  &  1.394e-04  & 5.7\\
 200 & 3.646e-06 & 3.8  &  1.777e-06  & 5.5  &  1.092e-06  & 7.0\\
 400 & 2.336e-07 & 4.0  &  3.072e-08  & 5.9  &  4.928e-09  & 7.8\\
\hline
\end{tabular}
\end{minipage}
\hfill
\begin{minipage}{0.3\textwidth}
\centering
\begin{tabular}{ |p{0.5cm}|c@{\hskip 5pt}c| } 
\hline
\multicolumn{3}{|c|}{\textbf{WENO3-nWB}} \\ 
\hline
 N & Error  & Order \\
 \hline
 25   & 1.678e-01  & -  \\
 50   & 2.723e-03  & 2.6  \\
 100  & 4.562e-04  & 2.6  \\
 200  & 8.543e-05  & 2.4  \\
 400  & 1.108e-05  & 2.9  \\
\hline
\end{tabular}
\end{minipage}
\end{table}

\begin{table}[h]
\centering
\caption{Shallow water equation. Test \ref{ss:num_subcr}: supercritical reversed steady state. Errors in $L^1$ norm and convergence rate for WENO3GF-AM$q$, $q=4,5,8$ (left) and WENO3-nWB (right).
\label{TableSuper_re}}
\begin{minipage}{0.5\textwidth}
\centering
\begin{tabular}{ |p{0.5cm}|c@{\hskip 5pt}c|c@{\hskip 5pt}c|c@{\hskip 5pt}c|  } 
\hline
 & \multicolumn{2}{|c|}{4 steps} & \multicolumn{2}{|c|}{6 steps} & \multicolumn{2}{|c|}{8 steps} \\
 \hline
 \multicolumn{7}{|c|}{\textbf{WENO3GF - AM}} \\
  \hline
 N & Error  & Order & Error & Order & Error & Order \\
 \hline
 50  & 4.187e-05 & -    & 1.303e-04   & -    &  3.314e-04  & -\\
 100 & 4.128e-06 & 3.4  & 4.212e-06   & 5.0  &  5.292e-06  & 6.0\\
 200 & 2.759e-07 & 3.9  & 8.455e-08   & 5.6  &  3.524e-08  & 7.2\\
 400 & 1.784e-08 & 4.0  & 1.408e-09   & 5.9  &  1.618e-10  & 7.8\\
 \hline
 \multicolumn{7}{|c|}{\textbf{WENO3GF - AB}} \\
 \hline
 50  & 6.097e-04 & -    &  2.125e-03  & -    &  7.092e-03   & -  \\
 100 & 5.090e-05 & 3.6  &  8.095e-05  & 4.7  &  1.395e-04   & 5.6\\
 200 & 3.657e-06 & 3.8  &  1.776e-06  & 5.5  &  1.092e-06   & 7.0\\
 400 & 2.344e-07 & 4.0  &  3.070e-08  & 5.9  &  4.928e-09   & 7.8 \\
\hline
 \end{tabular}
\end{minipage}
\hfill
\begin{minipage}{0.3\textwidth}
\centering
\begin{tabular}{ |p{0.5cm}|c@{\hskip 5pt}c| } 
\hline
\multicolumn{3}{|c|}{\textbf{WENO3-nWB}} \\ 
\hline
 N & Error  & Order \\
 \hline
 50   & 2.397e-03  & -  \\
 100  & 2.996e-04  & 3.0  \\
 200  & 3.879e-05  & 2.9  \\
 400  & 4.910e-06  & 3.0  \\
\hline
\end{tabular}
\end{minipage}
\end{table}

\subsubsection{Perturbations subcritical and supercritical  stationary solutions} \label{ss:subcritical_pert}

As in the case of Test \ref{ss:lake_at_rest_pert}, we consider now an initial condition that represents a perturbation of a moving stationary solution.
For brevity we consider only the subcritical case of  section \ref{ss:num_subcr}. 
The results for the supercritical are very similar.

First, we consider the evolution of a small perturbation  $\Delta h=10^{-4}\; $m,
which is is added to the depth in the subdomain $[7.5,\; 9.5]\;m$.
The left picture on figure \ref{fig:sub_pert} shows the numerical results  obtained with WENO3GF-AM$q$ using a mesh of $N =100$ points at time $t = 1$ sec.
For comparison we report the results of the non-well balanced WENO3 scheme, and a reference solution obtained on 500 points with the WENO7GF-AM8 scheme.
We can see that on this coarse mesh the well balanced formulations allow to nicely
follow the evolution of such a small perturbation.\\

For completeness we also consider a perturbation of $\Delta h=1\; $m,
which is added for all points  to the left of $x = 12\,$m. For the smooth bathymetry used in this case all schemes provide a similar evolution of the solution.

\begin{figure}
\includegraphics[width=0.5\linewidth]{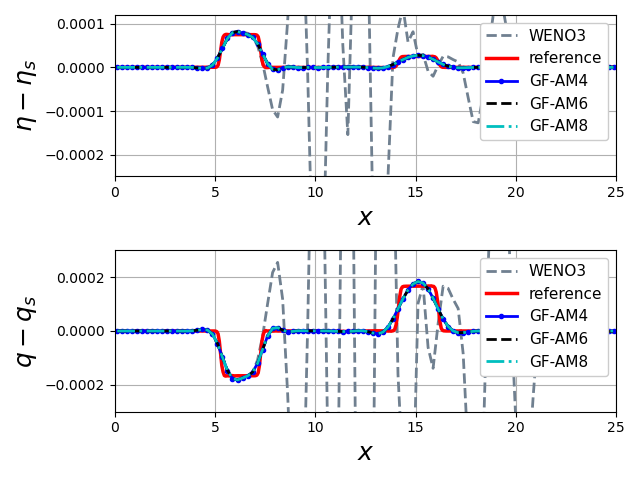}
\includegraphics[width=0.5\linewidth]{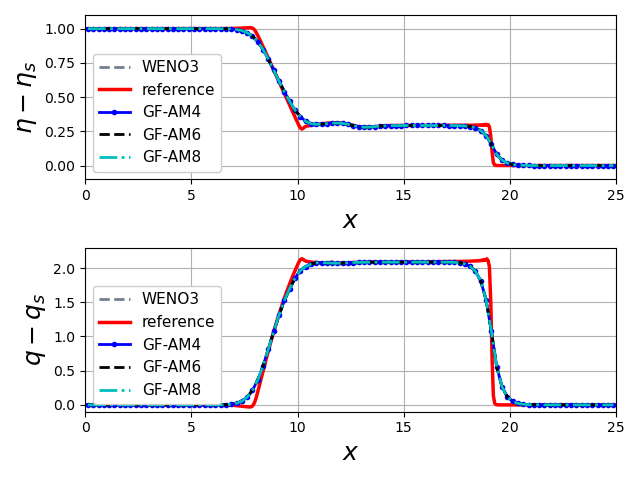}    
    \caption{Shallow water equations. Test \ref{ss:subcritical_pert}: perturbation of a subcritical steady state. Left: small perturbation. Right: big perturbation. Differences between the underlying stationary solution and the numerical solutions obtained with WENO3GF-AM$q$, $q = 4,6,8$ using $N = 100$ points at time $t=1\; sec$ compared with a reference solution and the numerical solutions given by WENO3-nWB. Top row: $\eta$. Bottom row: $q$.}
    \label{fig:sub_pert}
\end{figure}
\noindent

%

\noindent
\subsubsection{Approximation of a transcritical stationary solution} \label{ss:num_trans}

For completeness we also report the results obtained for a transcritical case.
To begin with we compute the steady states using the AB ODE solver of different accuracies and perform a grid convergence  to the  analytical solution
obtained solving the exact cubic relation  associated to (8). 
The results are reported on table \ref{TableSuper_re} in which we compare
the accuracy of the steady  WENO5FG-AB for different accuracies, with those
of the standard non well balanced upwind WENO5. 
Note that we do not modify the flux splitting to account for the 
presence of the sonic point. 
The tables show that on the mesh sizes considered also for this case the errors obtained using the global flux quadrature formulation are of orders of magnitude lower that those of the upwind scheme.

\begin{table}[h]
\centering
\caption{Shallow water equation. Test \ref{ss:num_trans}: transcritical flow over a smooth bump. Errors in $L^1$ norm and convergence rate for WENO5GF-AB$q$, $q=4,6,8$ (left) and WENO5-nWB (right).
\label{TableSuper_re}}
\begin{minipage}{0.5\textwidth}
\centering
\begin{tabular}{ |p{0.5cm}|c@{\hskip 5pt}c|c@{\hskip 5pt}c|c@{\hskip 5pt}c|  } 
\hline
 & \multicolumn{2}{|c|}{4 steps} & \multicolumn{2}{|c|}{6 steps} & \multicolumn{2}{|c|}{8 steps} \\
 \hline
 \multicolumn{7}{|c|}{\textbf{WENO5GF - AB}} \\
  \hline
 N & Error  & Order & Error & Order & Error & Order \\
 \hline
 50  & 0.0991049 & -    & 0.2174215   & -    &  0.2381622  & -\\
 100 & 0.0066493 & 3.91  & 0.0088290   & 4.64  &  0.0062804  & 5.26\\
 200 & 0.0004984 & 3.75  & 0.0001740   & 5.68  &  4.317e-05  & 7.21\\
 400 & 4.483e-05 & 3.49  & 3.375e-06   & 5.71  &  1.101e-06  & 5.31\\
 \hline
 \end{tabular}
\end{minipage}
\hfill
\begin{minipage}{0.3\textwidth}
\centering
\begin{tabular}{ |p{0.5cm}|c@{\hskip 5pt}c| } 
\hline
\multicolumn{3}{|c|}{\textbf{WENO5-nWB}} \\ 
\hline
 N & Error  & Order \\
 \hline
 50   & 0.2305202  & -  \\
 100  & 0.0913807  & 1.33  \\
 200  & 0.0194528  & 2.24  \\
 400  & 0.0004357  & 5.50  \\
\hline
\end{tabular}
\end{minipage}
\end{table}

\subsubsection{Perturbation of a transcritical stationary solution} \label{ss:trans_pert}

We also consider the evolution of perturbations of the smooth transcritical flow of the previous section. For each  scheme we consider the perturbation 
of the well prepared  data corresponding to its discrete steady state.
In the case of the GF-AB schemes this is obtained directly using the ODE solver. We add to this data a   small perturbation of $\Delta h= 10^{-4}$ m. Figure \ref{fig:trans_pert} shows on the left the discrete steady solution obtained using  the ODE AB and on the right the numerical results obtained with WENO5-AB$q$,$q = 4,6,8$. We used a mesh of $N=100$ points at time $t=0.7$ sec. We also report the results of the WENO5-nWB and a reference solution obtained with $N=1000$ points and using the WENO7-AM8 scheme. Like before we confirm that the well balanced formulation allow to nicely follow the evolution of the small pertrubation.     

\begin{figure}
\includegraphics[width=0.5\linewidth]{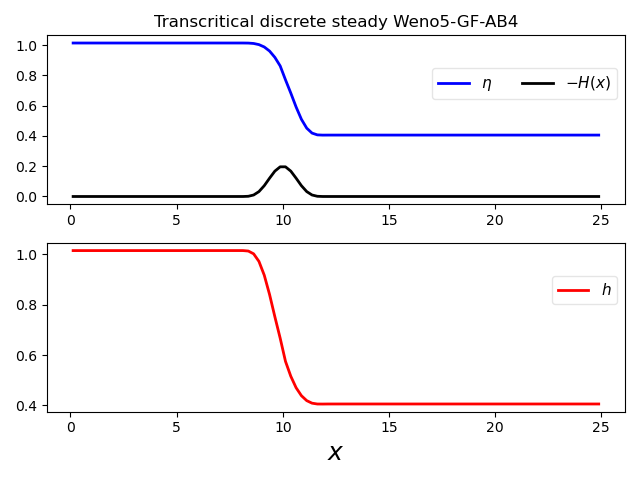}
\includegraphics[width=0.5\linewidth]{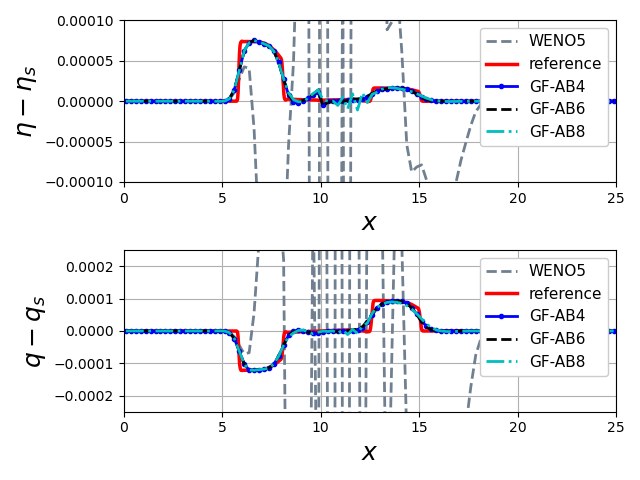}
    \caption{Shallow water equations. Test \ref{ss:trans_pert}: perturbation of a transcritical discrete steady state. Left: Discrete steady solution. Right: Differences between the underlying discrete stationary solution and the numerical solutions obtained with WENO5GF-AB$q$, $q = 4,6,8$ using $N = 100$ points at time $t=0.7\; sec$ compared with a reference solution and the numerical solutions given by WENO5-nWB. Top row: $\eta$. Bottom row: $q$.}
    \label{fig:trans_pert}
\end{figure}
\noindent


\subsubsection{Moving equilibria with bathymetry and friction}

\paragraph{Analytical solutions with $\kappa h q|q|$ friction.}  
We consider here  \eqref{sw1} with the  friction closure   
$$
\kappa = k h |q|.
$$
In this case, assuming that the flow runs left to right, and  setting $\eta=h-H$, the steady ODE \eqref{ssSW1} can be recast as   
$$
- C_1^2h^{-2} h_x  + gh \eta_x  = - k C_1^2 h
$$
which, dividing through by $h$ gives
\begin{equation}\label{eq:f-ode}
- C_1^2h^{-3} h_x  + g \eta_x  = - k C_1^2.
\end{equation}
This equation can be integrated to obtain the equalities
$$
q = C_1, \quad \frac{1}{2} \dfrac{q^2}{h^2} + gh - gH - k q^2 x = C_2.
$$
Given $C_i$, $i=1,2$ the point values of a stationary solution can be then computed by solving a cubic equation like in the case without friction.

To obain a fully analytical expression we proceed in a slightly different manner. We impose a prescribed profile $z(x)$  to the free surface elevation $\eta$, and then compute $h$   by solving \eqref{eq:f-ode}. A compatible definition of the depth at rest  $H$ is obtained as  $H = h - z$.
If the boundary condition
$$
h(x_0) = h_0, \quad q(x_0) = q_0$$
is imposed, the corresponding stationary solution is given by
\begin{equation}\label{eq:f-solution}
\begin{aligned}
   q^{\textsf{e}}(x) =& q_0,\\
    h^{\textsf{e}}(x) = &   \dfrac{h_0}{\sqrt{ 1 + \dfrac{2h_0^2g}{q_0^2} ( z(x_0) -z(x) )   - 2 k h_0^2  (x-x_0)} }, & \\
    H(x) = & h^{\textsf{e}}(x) - z^{\textsf{e}}(x) .
\end{aligned}
\end{equation}
We now consider the free surface profiles 
\begin{equation}\label{eq:zx}
    z(x)= \mathsf{a}-\mathsf{b}*\dfrac{\mathsf{c}x e^{\cos(4\pi x)}  - 1/e }{ e-1/e}
\end{equation}
One can show that a supercritical solution is obtained for $$(h_0,q_0,\mathsf{a},\mathsf{b},\mathsf{c},k)=(1,3/2,5/2,1/2,2,0.3),$$ while a subcritical one is obtained for
$$(h_0,q_0,\mathsf{a},\mathsf{b},\mathsf{c},k)=(1,0.3,5/2,1/4,1/2,1/2).$$
The two solutions are plotted for completeness in Figure \ref{fig:frictsols}.

\begin{figure}
\includegraphics[width=0.5\linewidth]{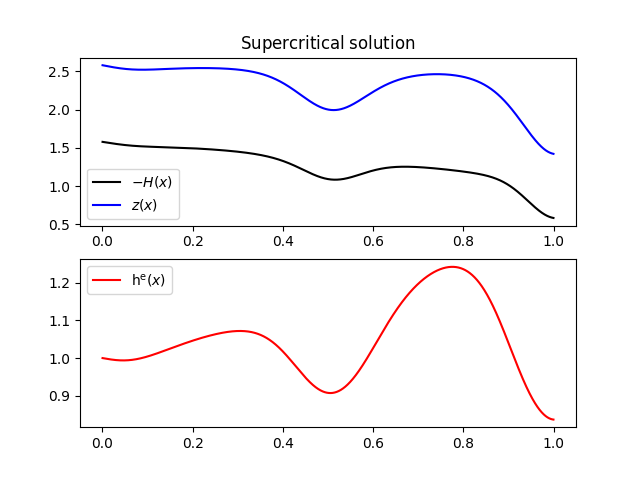}\includegraphics[width=0.5\linewidth]{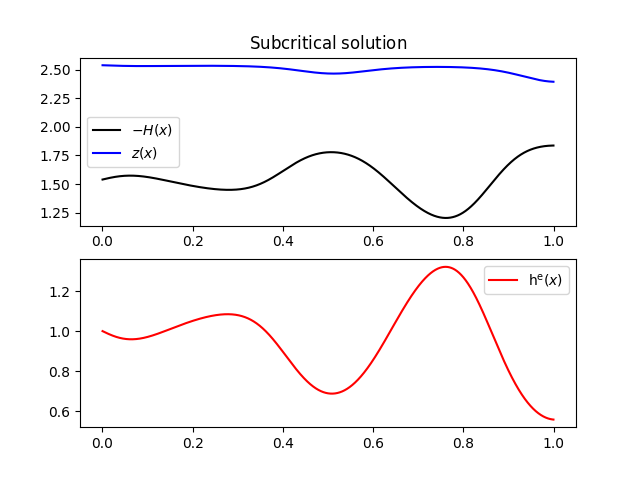}    
    \caption{Shallow water equations with friction law $\kappa h q|q|$. Supercritical  (left) and subcritical (right) exact solutions. Top: free surface and bottom. Down: $h$. }
    \label{fig:frictsols}
\end{figure}

We start by computing the  grid convergence of the discrete steady state to the exact one.  In Table \ref{tab:firct-super} the $L^1$ errors with respect to the analytical solution for WENO3GF-AM$q$, $q = 4,6,8$ are compared with the corresponding to the standard WENO3 and WENO5 schemes.  Similar results are obtained for WENO5GF-AM$q$ or WENO$p$-AB$q$, $p=3,5$, $q = 4,6,8$. The table shows that also for friction source terms, the accuracy of the underlying ODE solver is retrieved.  Compared to the standard WENO3 method, this leads to error reductions which go from 2 to 6 orders of magnitude on the finest resolutions and of 2 or 3 orders of magnitude on the intermediate one. Similar results are obtained for the subcritical stationary solution.

\begin{table}[h]
\caption{Shallow water equations with friction law $\kappa h q|q|$. Supercritical stationary solution: 
errors in $L^1$ norm for $H$ and convergence rates for WENO3GF-AM$q$, with $q=4,6,8$ and WENO$p$-nWB, $p=3,5$.
\label{tab:firct-super}}
\begin{minipage}{0.5\textwidth}
\centering\begin{tabular}{ |p{0.5cm}|c@{\hskip 5pt}c|c@{\hskip 5pt}c|c@{\hskip 5pt}c|  } 
\hline
 & \multicolumn{2}{|c|}{4 steps} & \multicolumn{2}{|c|}{6 steps} & \multicolumn{2}{|c|}{8 steps} \\
 \hline
 N & Error  & Order & Error & Order & Error & Order \\
 \hline
 \multicolumn{7}{|c|}{\textbf{WENO3GF - AM}} \\
 \hline
 20  & 3.789e-03 & -    & 2.960e-03   & -    &  2.360e-03  & -\\
 40  & 1.671e-04 & 6.3  & 9.639e-05   & 5.0  &  5.734e-05  & 5.4\\
 80 & 6.966e-06 & 4.6  & 1.225e-06   & 6.3  &  3.060e-07  & 7.6\\
 160 & 3.085e-07 & 4.5  & 1.379e-08   & 6.5  &  1.184e-09  & 8.0\\
 320 & 1.504e-08 & 4.4  & 1.842e-10   & 6.2  &  4.554e-12  & 8.0\\
 \hline
\end{tabular}
\end{minipage}\\
\centering\begin{minipage}{0.3\textwidth}
\begin{tabular}{ |p{0.5cm}|c@{\hskip 5pt}c| } 
\hline
\multicolumn{3}{|c|}{\textbf{WENO3nWB}} \\ 
\hline
 N & Error  & Order \\
 \hline
    20  & 7.606e-03 & -    \\
     40  & 1.027e-03 & 2.9    \\
 80 & 1.364e-04 & 2.9    \\
 160 & 1.743e-05 & 3.0    \\
 320 & 2.195e-06 & 3.0   \\
\hline
\end{tabular}
\end{minipage}
\begin{minipage}{0.3\textwidth}
\centering
\begin{tabular}{ |p{0.5cm}|c@{\hskip 5pt}c| } 
\hline
\multicolumn{3}{|c|}{\textbf{WENO5nWB}} \\ 
\hline
 N & Error  & Order \\
 \hline
20  &   5.706e-03  & -    \\
 40  &    3.913e-04  & 3.9  \\
     80 &    2.066e-05  & 4.3  \\
 160 &    6.632e-07  & 5.0  \\
 320 &    2.037e-08  & 5.0 \\
\hline
\end{tabular}
\end{minipage}
\end{table}

We now add the perturbation $\Delta h = 10^{-4}$ in $[0.1,0.2]$ to the supercritical stationary solution and run the simulations until the final time $T=0.08$. 
A reference solution is computed on $N=2000$ points with WENO5GF-AM8.  Figure  \ref{fig:frict-pert-sup} shows the numerical solution
obtained with WENO3GF-AM4 and WENO3-nWB using  $N=100$ points: as it can be seen, the former captures well the evolution of the perturbation while the latter gives errors of orders of magnitude higher than the initial perturbation on this relatively coarse mesh. For comparison we also report
the WENO3 solution on a much refined mesh with $N=400$ which,  of course, follows more accurately the  reference. The conclusions are similar for the numerical methods based on WENO5 and/or different ODE solvers as well as for the perturbation of the subcritical stationary solution.

\begin{figure}
\includegraphics[width=0.45\linewidth]{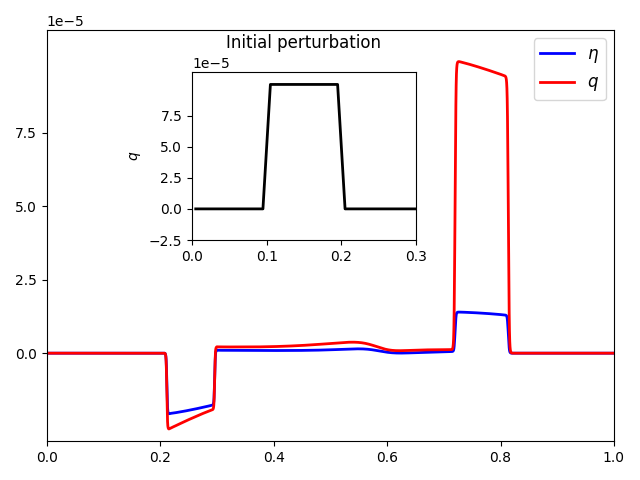}\includegraphics[width=0.55\linewidth]{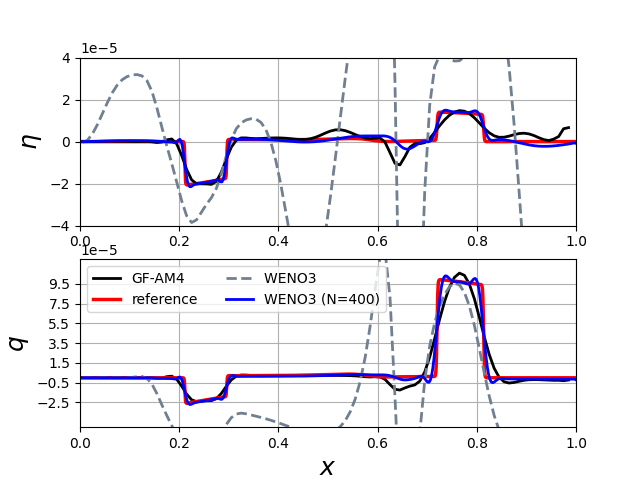}    
    \caption{Shallow water equations with friction law $\kappa h q|q|$. Small perturbation of the supercritical steady state.  Left: initial perturbation and differences between the reference and the stationary solutions. Right: differences between the stationary solution and the numerical solutions obtained with WENO3GF-AM4 using $N = 100$ points at time $t=0.08\; sec$ compared with the reference solution and the numerical solutions given by WENO3-nWB with $N = 100, 400$ point. Top: $\eta$. Down: $q$.}
    \label{fig:frict-pert-sup}
\end{figure}


\noindent
\paragraph{Moving solutions with classical Manning friction}

Last, we consider the shallow water equations with the classical Manning friction closure
$$
\kappa =  \frac{k|q|}{h^{\mu}}
$$
 where $k$ is the Manning friction coefficient and $\mu$ is set to  $ \frac{7}{3}$. 
 The Manning constant used is set to 0.05. 

As in previous cases, moving equilibria can still be achieved even when friction with a constant Manning coefficient is present. In the supercritical case, the friction term causes a reduction in physical speed from left to right, resulting in an increase in the free surface elevation across the same direction.

In this case, we first compute the discrete steady-state solution with a sweep of the  Adams-Bashforth multi-step solver (see section \ref{section:implementation_details}).  

The  solution obtained with the AB6 scheme  is reported on the left of figure \ref{fig:frict-pert-sup-manning}.   We  use  the ODE solutions   as  well prepared initial data. 
Table \ref{Table_SW_DSUP} presents the errors in $L^1$ norm for the WENO5-AB$q$ with  $q = 4,6,8$ obtained at time $t=2$ sec. As it can been seen the discrete steady solution is preserved up to machine accuracy.

\begin{table}[h]
\centering
\caption{Shallow water equations with Manning friction law. Supercritical discrete stationary solution. Errors in $L^1$ norm for WENO5GF-AB$q$, $q = 4,6,8$ at time $t=2$ sec.}
\label{Table_SW_DSUP}
\begin{tabular}{ |p{0.5cm}|c|c|c|  } 
\hline
N &  \textbf{WENO5GF-AB6} & \textbf{WENO5GF-AB8}  \\
 \hline 
 20   &   1.365e-13  &  3.912e-10\\
 40   &   2.364e-13  &  1.620e-13\\
 80   &   2.033e-13  &  3.140e-13\\
 160  &   2.516e-13  &  6.079e-13\\
 320  &   1.724e-12  &  1.481e-12\\
\hline
\end{tabular}
\end{table}

Next, we perturb the discrete steady solutions of WENO5-GF-AB6 with a small perturbation of $\Delta h= 0.0001$ m placed in the interval [7.5, 9.5] m. Figure \ref{fig:frict-pert-sup-manning} (right) compares the scheme's solution, on a supercritical flow,  with a reference solution obtained using WENO7GF-AB8, and WENO5 using $N=100$ points. The figure shows that the error of the WENO5 is of the order of the perturbation resulting to degrade the solution quality. For comparison we also plot the WENO5 solution on a refined mesh with $N=400$ points.  

A similar behavior is observed in the computation of the discrete steady solution for the subcritical case; therefore, we omit the table. In this case, the water depth is expected to decrease from left to right, while the speed increases, see figure \ref{fig:frict-pert-sub-manning} on the left. Using the discrete steady solution as well-prepared data for the initialization we add the perturbation and obtain the numerical results at $t=1$ sec.

\begin{figure}
\includegraphics[width=0.45\linewidth]{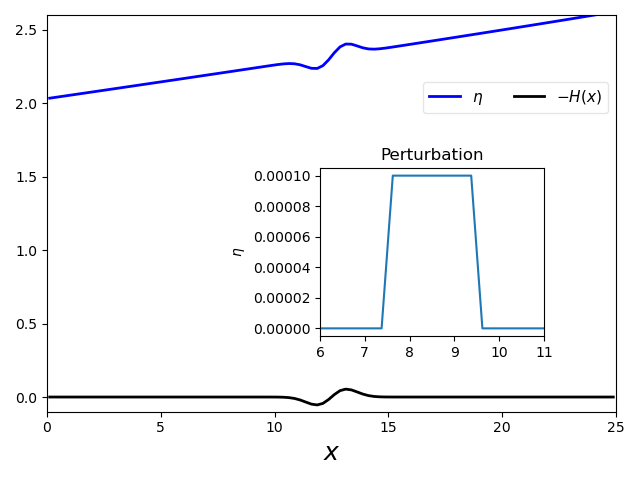}
\includegraphics[width=0.55\linewidth]{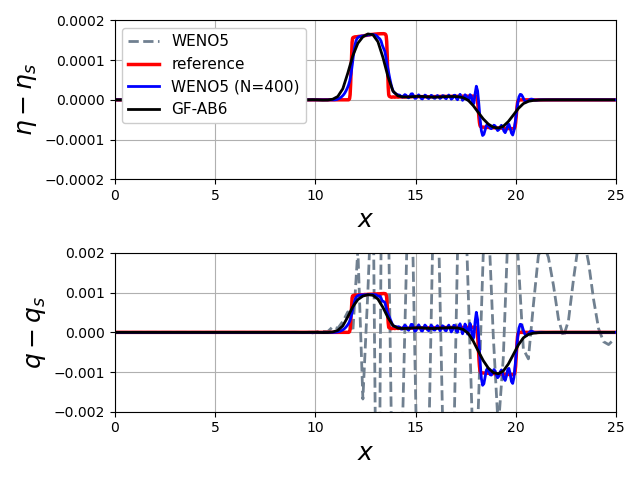}
\caption{Perturbation of the discrete steady solutions with Manning friction 0.05,  supercritical case. Left: initial discrete steady solution (Weno5-GF-AB6). Right: comparison between reference and the solutions at time $t=0.7$ obtained with the WENO5-GF-AB6 and WENO5 on $N=100$ points, and WENO5 on $N=400$ points.}
\label{fig:frict-pert-sup-manning}
\end{figure}
 
\begin{figure}
\includegraphics[width=0.45\linewidth]{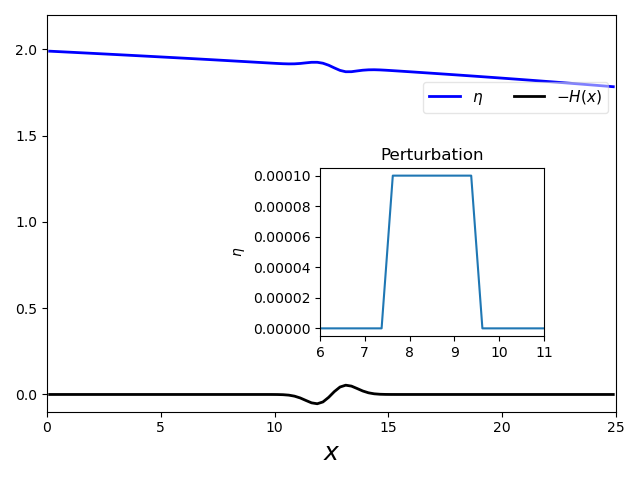}\includegraphics[width=0.55\linewidth]{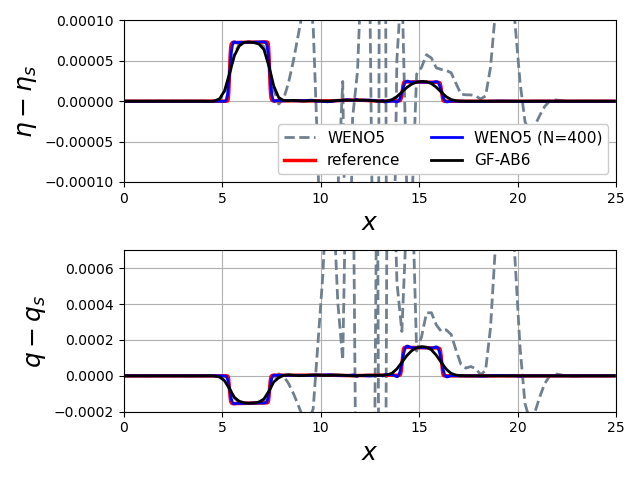}   
    \caption{Perturbation of the discrete steady solutions with Manning friction 0.05,  subcritical case. Left: initial discrete steady solution (Weno5-GF-AB6). Right: comparison between reference and the solutions at time $t=1.0$ obtained with the WENO5-GF-AB6 and WENO5 on $N=100$ points, and WENO5 on $N=400$ points.}
    \label{fig:frict-pert-sub-manning}
\end{figure}



\section{Conclusion}
In this work, we proposed high-order well-balanced methods for one-dimensional hyperbolic systems of balance laws. These methods are designed to ensure that the discrete steady states align with solutions of high-order ODE integrators, without explicitly solving the ODEs. By embedding this property directly into the scheme, we avoid applying the ODE integrator to solve the local Cauchy problem.
A sweep of the ODE solution can still be used to obtain well prepared initial data if required.

To achieve this result we have combined   a WENO finite difference framework, with
a  global flux quadrature strategy in which the source quadrature employs  weights
from   high-order multi-step ODE methods on the finite difference grid. This approach ensures a well-balanced treatment of the source term and a direct connection between the scheme’s discrete solutions and the underlying ODE integrator. In practice, we used WENO reconstructions of orders 3 to 7 with Adams-Moulton and Adams-Bashforth methods of up to order 8.

We performed classical benchmarks for two prototype equations involving moving equilibria to verify the correct implementation and convergence of our methods. For time-dependent solutions, we demonstrated that the order of accuracy is determined by the minimum between the WENO reconstruction order and the ODE integrator order. For smooth steady-state solutions, both the order of accuracy and the error values are uniquely determined by the choice of the multi-step method. This allows for significant error reduction—by several orders of magnitude on a given mesh—simply by adjusting the weights of the quadrature formula.

We also extended our methods to handle singular source terms and to preserve the water-at-rest solution for the shallow water equations. For the latter system, we proposed a strategy to find a particular family of stationary solutions, both with and without friction. 

\section*{Acknowledgements}
The work of Carlos Par\'es is partially supported by grants PID2022-137637NB-C21 and PID2022-137637NB-C22 funded by MCIN/AEI/10.13039/501100011033 and by “ERDF A way of making Europe”. Maria Kazolea and Mario Ricchiuto are members of the CARDAMOM research team of the Inria center at the University of Bordeaux.

\noindent \section*{In memoriam}
 
\noindent This paper is dedicated to the memory of Prof. Arturo Hidalgo L\'opez
($^*$July 03\textsuperscript{rd} 1966 - $\dagger$August 26\textsuperscript{th} 2024) of the Universidad Politecnica de Madrid, organizer of HONOM 2019 and active participant in many other editions of HONOM.
Our thoughts and wishes go to his wife Lourdes and his sister Mar\'ia Jes\'us, whom he left behind.

\appendix

\section{Coefficients for the AM and AB schemes}\label{app_AM-AMcoeff}
For the sake of completeness, we include here the coeffiients of the multi-step methods that are used to compute the integrals of the source term and the discrete stationary solutions. Since  we work with schemes of formal accuracy $2k+1\ge 3$
we consider ODE solvers whose solutions have accuracy at least $2k+2\ge 4$. The coefficients for the relevant methods are provided below up to order 8\footnote{for higher orders check the tables/codes here: {\tt http://www.mymathlib.com/diffeq/adams/}}.

\paragraph{AB$q$ methods} The explicit AB$q$ methods of order $q$ = 4, 6, and 8 use $q$ steps and $\beta_q = 0$. The remaining coefficients are:
\begin{equation*}
\begin{array}{lllll}
\textsf{AB4:} \\[10pt]
\beta_{3} = &\dfrac{55}{24}\\[15pt]
\beta_{2} = -&\dfrac{59}{24}\\[15pt]
\beta_{1} = &\dfrac{37}{24}\\[15pt]
\beta_{0} = -&\dfrac{9}{24}
\end{array}
\;,\quad
\begin{array}{lllllll}
\textsf{AB6:}  \\[10pt]
\beta_{5} = &\dfrac{4277}{1440}\\[15pt]
\beta_{4} = -&\dfrac{7923}{1440}\\[15pt]
\beta_{3} = &\dfrac{9982}{1440}\\[15pt]
\beta_{2} = -&\dfrac{7298}{1440}\\[15pt]
\beta_{1} = &\dfrac{2877}{1440}\\[15pt]  
\beta_{0} = -&\dfrac{475}{1440}
\end{array}
\;,\quad
\begin{array}{lllllllll}
\textsf{AB8:} \\[10pt]
\beta_{7} = &\dfrac{434241}{120960}\\[15pt]
\beta_{6} = -&\dfrac{1152169}{120960}\\[15pt]
\beta_{5} = &\dfrac{2183877}{120960}\\[15pt]
\beta_{4} = -&\dfrac{2664477}{120960}\\[15pt]
\beta_{3} = &\dfrac{2102243}{120960}\\[15pt]  
\beta_{2} = -&\dfrac{1041723}{120960}\\[15pt]    
\beta_{1} = &\dfrac{295767}{120960}\\[15pt]  
\beta_{0} = -&\dfrac{36799}{120960}
\end{array}
\end{equation*}



\paragraph{AM$q$ methods} The  AM$q$  methods of order $q$ are implicit and use $s = q-1 $ steps. The coefficients are the following:

\begin{equation*}
\begin{array}{lllll}
\textsf{AM4:} \\[10pt]
\beta_{3} = &\dfrac{9}{24}\\[15pt]
\beta_{2} = &\dfrac{19}{24}\\[15pt]
\beta_{1} = -&\dfrac{5}{24}\\[15pt]
\beta_{0} = &\dfrac{1}{24}
\end{array}\;,\quad
\begin{array}{lllll}
\textsf{AM6:} \\[10pt]
\beta_{5} = &\dfrac{475}{1440}\\[15pt]
\beta_{4} = &\dfrac{1427}{1440}\\[15pt]
\beta_{3} = -&\dfrac{798}{1440}\\[15pt]
\beta_{2} = &\dfrac{482}{1440}\\[15pt]
\beta_{1} = -&\dfrac{173}{1440}\\[15pt]  
\beta_{0} = &\dfrac{27}{1440} 
\end{array}\;,\quad
\begin{array}{lllllll}
\textsf{AM8:} \\[10pt]
\beta_{7} = &\dfrac{36799}{120960}\\[15pt]
\beta_{6} = &\dfrac{139849}{120960}\\[15pt]
\beta_{5} = -&\dfrac{121797}{120960}\\[15pt]
\beta_{3} = &\dfrac{123133}{120960}\\[15pt]
\beta_{3} = -&\dfrac{88547}{120960}\\[15pt]  
\beta_{2} = &\dfrac{41499}{120960}\\[15pt]    
\beta_{1} = -&\dfrac{11351}{120960}\\[15pt]  
\beta_{0} = &\dfrac{1375}{120960} 
\end{array}
\end{equation*}



\section{Discontinuous source linearization}\label{app_linearization}

Let us show some examples of linearizations of the source term satisfying \eqref{linearS(U)}:
\begin{itemize}
    \item  For the Burgers' equation \eqref{sbl}-\eqref{Burgers}
we can compute simple  linearizations of the source using satisfying \eqref{linearS(U)} from the relations
\eqref{ssBurgers2}, \eqref{ssBurgersp} satisfied at an admissible jump:
\begin{itemize}
\item $p=1$: $\tilde S_{i+1/2}=\bar U_{i+1/2}:=(U_i+U_{i+1})/2 $;
\item $p=2$: $\tilde S_{i+1/2}= \dfrac{\bar U_{i+1/2} [\![U]\!]_{i+1/2}}{\ln(U_{i+1}/U_i)}$;
\item $p \ge 3$: $\tilde S_{i+1/2}= (2-p)\bar U_{i+1/2} \dfrac{ [\![U]\!]_{i+1/2} }{ [\![ U^{2-p}] \!]_{i+1/2} }$
\end{itemize}
Note that all the singularities due to the division by jumps can be removed easily for $p \ge 3$. A careful implementation of the case $p=2$ is required using the $\lim_{\alpha\rightarrow 0} \alpha/\ln(1+\alpha) = 1$.

\item  For the shallow water equations two relations \eqref{ssSW} are satisfied at an admissble jump
\begin{equation*}
\begin{split}
[\![ q ]\!]_{i+1/2} = &0, \\
g[\![ H ] \!]_{i+1/2} = & g[\![ h ] \!]_{i+1/2} +  [\![ u^2/2 ] \!]_{i+1/2}.
\end{split}
\end{equation*}
Therefore one has
$$q_j = \overline{q}_{i+ 1/2}, \quad u_j = \frac{\overline{q}_{i+ 1/2}}{h_j}, \quad j = i, i+1.$$
We deduce that across an admissible jump
$$
g[\![ H ] \!]_{i+1/2}= \left\{g
-  \dfrac{\overline{q}_{i+1/2}^2 \bar h_{i+1/2}}{(h_i h_{i+1})^2}  \right\}[\![ h ] \!]_{i+1/2}.
$$
Imposing that
$$
[\![hu^2 + gh^2/2 ]\!]_{i+1/2} = g\tilde h_{i+1/2}
[\![ H ] \!]_{i+1/2}
$$
which means 
$$
-  \dfrac{\overline{q}_{i+1/2}^2 }{h_ih_{i+1}}[\![ h ] \!]_{i+1/2}   + g\bar h_{i+1/2}[\![ h ] \!]_{i+1/2}
= \tilde h_{i+1/2}  \left\{g-
  \dfrac{\overline{q}_{i+1/2}^2 \bar h_{i+1/2}}{(h_i h_{i+1})^2}  \right\}[\![ h ] \!]_{i+1/2},
$$
we end up with
$$
\tilde h_{i+1/2} =\dfrac{ g\bar h_{i+1/2}-
  \dfrac{\overline{q}_{i+1/2}^2 }{h_ih_{i+1}}   }{\left\{g
-  \dfrac{\overline{q}_{i+1/2}^2 \bar h_{i+1/2}}{(h_i h_{i+1})^2}  \right\} }=\bar  h_{i+1/2}+
 \dfrac{   \dfrac{\overline{q}_{i+1/2}^2}{g(h_i h_{i+1})^2 }(\bar h_{i+1/2}^2  -   h_ih_{i+1})    }{ 1
-   \dfrac{\overline{q}_{i+1/2}^2}{g(h_i h_{i+1})^2 } \bar h_{i+1/2} }  .
$$
Then,
$$\tilde S_{i+1/2}  = \left[ \begin{array}{c} 0 \\ - g\tilde h_{i+1/2} \end{array} \right]$$
is defined.
If the equality \eqref{intwar} is used to compute the integral of the source term, the linearization would be instead as follows
 $$
     \int_{x_i}^{x_{i+1}}ghH_x\approx g\tilde \eta_{i+1/2} [\![H]\!]_{i+1/2} -\frac{g}{2} [\![H^2]\!]_{i+1/2}
     $$
     with
     $$
     \tilde\eta_{i+1/2}:= \bar \eta_{i+1/2}
+  \dfrac{   \dfrac{\overline{q}_{i+1/2}^2}{g(h_i h_{i+1})^2 }(\bar h_{i+1/2}^2  -   h_ih_{i+1})    }{ 1
-   \dfrac{\overline{q}_{i+1/2}^2}{g(h_i h_{i+1})^2 } \bar h_{i+1/2} }  
     $$

\end{itemize}

\bibliographystyle{acm}
\bibliography{references}
 
\end{document}